\documentclass[11pt]{amsart}

\usepackage[margin=1.12in]{geometry}
\usepackage{amsmath,amssymb,amsthm,mathtools}
\usepackage[hidelinks]{hyperref}
\usepackage{microtype}

\allowdisplaybreaks

\newtheorem{theorem}{Theorem}[section]
\newtheorem{lemma}[theorem]{Lemma}
\newtheorem{proposition}[theorem]{Proposition}
\newtheorem{corollary}[theorem]{Corollary}
\newtheorem{remark}[theorem]{Remark}

\numberwithin{equation}{section}

\newcommand{\N}{\mathbb N}
\newcommand{\R}{\mathbb R}
\newcommand{\Z}{\mathbb Z}
\newcommand{\one}{\mathbf 1}
\newcommand{\eps}{\varepsilon}
\DeclareMathOperator{\sgn}{sgn}

\title[Geometric block exponents and mixed sumsets]
{Geometric block exponents and a uniform mixed-alphabet sumset inequality}

\author{Johannes Hosle}
\address{Department of Mathematics, Massachusetts Institute of Technology, Cambridge, MA 02139, USA}
\email{jhosle@mit.edu}

\author{Paata Ivanisvili}
\address{Department of Mathematics, University of California, Irvine, Irvine, CA 92697, USA}
\email{pivanisv@uci.edu}

\subjclass[2020]{11B13, 11B30, 26D15}
\keywords{sumsets, max-convolution, product sets, geometric blocks, Chebyshev systems}
\date{}

\hypersetup{
  pdftitle={Geometric block exponents and a uniform mixed-alphabet sumset inequality},
  pdfauthor={Johannes Hosle and Paata Ivanisvili}
}

\begin{document}

\begin{abstract}
We study sharp exponents in inequalities for pairs of finite geometric
blocks.  We characterize exactly when the endpoint $t=1$ determines the
optimal exponent and compute the exponent that is uniform in the length of
one block.  For a two-term first block the answer is
$p_0=\log 4/\log 6$.  This yields a uniform two-slice max-convolution
inequality and, for every $m,d\ge1$, the dimension-free mixed-alphabet
sumset bound
\[
        |A+B|\ge (|A||B|)^{p_0},
        \qquad
        A\subset\{0,1\}^d,\quad
        B\subset\{0,1,\ldots,m\}^d.
\]
For every $m\ge2$, the exponent $p_0$ is best possible; for $m=1$, a
larger exponent is available.
\end{abstract}

\maketitle

\section{Introduction and main results}\label{sec:intro}

Throughout, $\N=\{1,2,3,\ldots\}$.  For $n\ge0$ and $0\le t\le1$, put
\[
        G_n(t):=1+t+\cdots+t^n.
\]
For $r,s\in\N$, call an exponent $p>0$ \emph{admissible} for $(r,s)$ if
\begin{equation}\label{eq:block-def-intro}
        G_{r+s}(t^p)\ge G_r(t)^pG_s(t)^p
        \qquad(0\le t\le1),
\end{equation}
and declare $p=0$ admissible as well.  Let $p_{r,s}$ denote the largest
admissible exponent; its existence is proved in
Lemma~\ref{lem:basic-p}.  Testing \eqref{eq:block-def-intro} at $t=1$ gives
\[
        p_{r,s}\le
        a_{r,s}:=
        \frac{\log(r+s+1)}{\log(r+1)+\log(s+1)}.
\]
A second distinguished quantity is
\[
        b_{r,s}:=
        \frac{r(r+2)+s(s+2)}{(r+s)(r+s+2)};
\]
it is the obstruction obtained from the quadratic term in the expansion at
$t=1$.

Our first result determines exactly when the endpoint upper bound is sharp.

\begin{theorem}[Endpoint-active block exponents]\label{thm:endpoint-active}
For every $r,s\in\N$,
\[
        p_{r,s}=a_{r,s}
        \quad\Longleftrightarrow\quad
        a_{r,s}\ge b_{r,s}.
\]
\end{theorem}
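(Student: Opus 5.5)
Set $N:=r+s$ and, for $p>0$, define on $u\in[0,\infty)$
\[
        H_p(u):=\log G_N(e^{-pu})-p\log G_r(e^{-u})-p\log G_s(e^{-u}),
\]
so that $p$ is admissible iff $H_p\ge0$ on $[0,\infty)$ (with $t=e^{-u}$). The plan is to expand $H_p$ at $u=0$ for the necessity direction, and to make a sign-change count on $(0,\infty)$ for sufficiency.

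\emph{Local expansion.} We have $G_n(e^{-u})=\sum_{k=0}^n e^{-ku}$, and $\log G_n(e^{-u})-\log(n+1)$ is the log-moment generating function of $-u$ times the uniform variable on $\{0,\dots,n\}$. That variable has mean $n/2$ and variance $n(n+2)/12$, so
\[
        \log G_n(e^{-u})=\log(n+1)-\tfrac n2u+\tfrac{n(n+2)}{24}u^2+O(u^3).
\]
Substituting, $H_p(0)=\log(N+1)-p\log((r+1)(s+1))$, which vanishes exactly at $p=a_{r,s}$. The linear terms cancel identically because $N=r+s$. The quadratic coefficient is
\[
        \tfrac{p}{24}\bigl(pN(N+2)-r(r+2)-s(s+2)\bigr),
\]
which is nonnegative iff $p\ge b_{r,s}$.

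\emph{Necessity.} Suppose $a_{r,s}<b_{r,s}$. Then $H_{a_{r,s}}(u)<0$ for small $u>0$, so $a_{r,s}$ is not admissible. By Lemma~\ref{lem:basic-p}, $p_{r,s}$ is the largest admissible exponent and $p_{r,s}\le a_{r,s}$. Hence $p_{r,s}<a_{r,s}$.

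\emph{Sufficiency (main step).} Assume $a:=a_{r,s}\ge b_{r,s}$. It suffices to show $H_a\ge0$ on $(0,\infty)$, since then $a$ is admissible and $p_{r,s}=a$ by Lemma~\ref{lem:basic-p}. We know the following:
\begin{itemize}
\item $H_a(0)=0$ and $H_a'(0)=0$.
\item $H_a''(0)\ge0$.
\item At infinity $H_a(u)\to0$ with $H_a(u)\sim e^{-au}-2a e^{-u}>0$, because $a<1$. (Indeed $a<1$, since $(r+1)(s+1)>r+s+1$.)
\end{itemize}
A negative value of $H_a$ would therefore force too many critical points. To rule this out I would clear denominators and consider
\[
        \Phi(u):=G_r G_s\,G_N(e^{-au})\,H_a'(u)\cdot(\text{suitable positive factor}).
\]
This is a finite exponential sum $\sum_j c_j e^{-\lambda_j u}$ with exponents $\lambda_j$ drawn from $\{k+a\ell\}$. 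The generalized Descartes rule of signs (the exponentials $e^{-\lambda u}$ form a Chebyshev system) bounds the number of positive zeros of $\Phi$ by the number of sign changes in the coefficient sequence once the $\lambda_j$ are ordered. The goal is to show that $H_a'$ has at most one zero in $(0,\infty)$, counted with the behaviour at $0$. Then $H_a$ first increases from $0$ (using $H_a''(0)>0$) and afterwards decreases to its positive tail, so it stays nonnegative.

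I expect this coefficient sign-pattern count to be the main obstacle. The ordering of $k+a\ell$ depends on the irrational-looking value of $a$. If the direct count is too coarse, I would first try a substitution $v=e^{-u}$, which turns $\Phi$ into a generalized polynomial in $v$ with real exponents, and then apply Descartes' rule there. This may be combined with the elementary inequality $G_N(x)\ge G_r(x)G_s(x)/\max(\cdot)$ to control large $u$ separately.

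The degenerate case $a=b$ needs a separate argument. There $H_a''(0)=0$, so I would compute the cubic coefficient, which involves the third cumulant $0$ of the symmetric uniform law, and then the quartic coefficient. The aim is to show the leading nonzero term is positive, so that the same zero-count argument applies with the multiplicity at $0$ raised accordingly.
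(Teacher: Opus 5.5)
\textbf{Necessity is fine; sufficiency has a genuine gap.} Your necessity argument is correct and is the paper's. At $p=a_{r,s}$ the constant term of $H_a$ vanishes, the linear terms cancel, and the quadratic coefficient is negative when $a_{r,s}<b_{r,s}$.

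The sufficiency direction, which is the real content of the theorem, is only a plan, and the proposed mechanism cannot work. Set
\[
 \Phi:=\frac{G_N(e^{-au})\,G_r(e^{-u})\,G_s(e^{-u})}{a}\,H_a'(u)
 =\sum_{\ell=0}^{N}\sum_{m=0}^{N}(m-\ell)\,n_m\,e^{-(a\ell+m)u},
\]
where $n_m=\#\{(i,j):0\le i\le r,\ 0\le j\le s,\ i+j=m\}$. The coefficient at exponent $a\ell+m$ therefore has sign $\sgn(m-\ell)$. Ordering by $a\ell+m$ interleaves the positive entries at $1,2,\dots$ (from $\ell=0$) with the negative ones at $a,2a,\dots$ (from $m=0$). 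So the number of sign changes generically grows linearly in $r+s$.

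Already for $(r,s)=(1,2)$, the case used later in the paper, $a=\log4/\log6$ is irrational and the ordered signs are
\[
 -,+,-,+,-,-,+,+,-,+,-,+,
\]
which has $9$ sign changes. Descartes' rule counts zeros on all of $\R$, and $H_a$ is even in $u$ because $N=r+s$. Hence this only shows that $H_a'$ has at most $4$ zeros on $(0,\infty)$, or at most $3$ after a parity argument, not $1$.

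Your fallback does not help either:
\begin{itemize}
\item The substitution $v=e^{-u}$ is literally the same Descartes rule, now for generalized polynomials, so it gives the same bound.
\item The remark about $G_N\ge G_rG_s/\max(\cdot)$ is not an argument.
\item Unimodality of $H_a$ is strictly more than you need, and you give no reason it holds.
\item The case $a=b_{r,s}$ is left open.
\end{itemize}

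\textbf{The missing idea} is to fix $t=e^{-\lambda}$ and move the complexity into the \emph{exponent} variable, where it no longer depends on $r,s$. Take $r\le s$, and write $[c]_t=(1-t^c)/(1-t)$, $\alpha=r+1$, $\beta=s+1$, $\gamma=r+s+1$. Admissibility of $a$ becomes $\log[a\gamma]_t-\log[a]_t-a\log[\alpha]_t-a\log[\beta]_t\ge0$. This quantity equals $\int_0^\infty h\rho$, where $h(c)=\lambda/(e^{\lambda c}-1)$ and $\rho=\one_{[a,a\gamma]}-a\one_{[1,\alpha]}-a\one_{[1,\beta]}$.

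The key facts are:
\begin{itemize}
\item $\int\rho=0$ and $\int\rho/c=0$.
\item $\int c\rho\ge0$ is \emph{exactly} the condition $a\ge b_{r,s}$.
\item $\rho$ has sign pattern $+,-,+,-$, with sign changes at $1,\alpha,a\gamma$.
\end{itemize}
Interpolate $h$ at these three nodes by $P=A+B/c+Cc$. The Wronskian signs of the extended Chebyshev system $(1,c^{-1},c,h)$ give $C>0$ and $\sgn(h-P)=\sgn\rho$. Hence
\[
 \int h\rho=C\int c\rho+\int(h-P)\rho\ge0.
\]
This handles $a=b_{r,s}$ at no extra cost and uses three nodes regardless of the block lengths. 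That fixed complexity is precisely what a zero count in $u$ lacks.
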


The next theorem determines the optimal exponent when the first block length
is fixed and the second is arbitrary.

\begin{theorem}[Fixed-head constant]\label{thm:fixed-head}
For fixed $r\in\N$, define
\[
        C_r:=\inf_{s\ge1}p_{r,s}.
\]
Then
\begin{equation}\label{eq:fixed-head-identity-intro}
        C_r=\min_{s\ge r}a_{r,s}.
\end{equation}
There is a unique $\sigma_r\in(r,\infty)$ satisfying
\begin{equation}\label{eq:sigma-intro}
 (\sigma_r+1)\log\bigl((r+1)(\sigma_r+1)\bigr)
 =
 (r+\sigma_r+1)\log(r+\sigma_r+1).
\end{equation}
If $k_r=\lfloor\sigma_r\rfloor$, then
\begin{equation}\label{eq:fixed-head-two-intro}
        C_r=\min\{a_{r,k_r},a_{r,k_r+1}\}.
\end{equation}
Moreover,
\begin{equation}\label{eq:real-min-intro}
 \inf_{\sigma\ge r}
 \frac{\log(r+\sigma+1)}{\log(r+1)+\log(\sigma+1)}
 =\frac{\sigma_r+1}{r+\sigma_r+1}.
\end{equation}
\end{theorem}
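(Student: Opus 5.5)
We rely on Theorem~\ref{thm:endpoint-active}: whenever $a_{r,s}\ge b_{r,s}$ we have $p_{r,s}=a_{r,s}$, and in all cases $p_{r,s}\le a_{r,s}$. The plan has five steps.

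\emph{Step 1: reduce to $s\ge r$.} The inequality \eqref{eq:block-def-intro} is symmetric in $(r,s)$, so $p_{r,s}=p_{s,r}$. For $s<r$ we will compare $p_{r,s}$ with $p_{r,r}$, which is one of the candidates in $\min_{s\ge r}$. It suffices to prove $p_{r,s}\ge a_{r,r}$ for $s\le r$. Since $a_{r,s}$ in $s$ decreases then increases, I expect this to follow from $p_{r,s}\ge\min(a_{r,s},b_{r,s})$ and $b_{r,s}\ge b_{r,r}$. I also need to verify that $\min(a_{r,s},b_{r,s})\ge\min_{s'\ge r}a_{r,s'}$ in that range.

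\emph{Step 2: lower bound.} The core claim is that for $s\ge r$ and $s$ near the minimizer, $a_{r,s}\ge b_{r,s}$, so Theorem~\ref{thm:endpoint-active} gives $p_{r,s}=a_{r,s}$. For all other $s$, I would show $p_{r,s}\ge\min_{s'\ge r}a_{r,s'}$ directly.

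A cleaner route is to show that $a_{r,s}\ge b_{r,s}$ holds for every $s\ge r$. Then $C_r=\inf_{s\ge r}a_{r,s}$ on that range. As $s\to\infty$, $b_{r,s}\to1$ and $a_{r,s}\to1$, so this needs care. Asymptotically
\[
1-a_{r,s}\approx\frac{\log(r+1)-r/s}{\log s},
\qquad
1-b_{r,s}\approx\frac{2r}{s},
\]
so $a<b$ for large $s$. Hence endpoint-activity fails for large $s$, and there we need $p_{r,s}\ge C_r$ independently. I would get this by monotonicity of admissibility in the tail: show that $p\le C_r<1$ is admissible for $(r,s)$ for all large $s$. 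The idea is to write $G_{r+s}(t^p)=G_r(t^p)+t^{p(r+1)}G_{s-1}(t^p)$ and use induction in $s$. Alternatively, use the limit $s\to\infty$, where the inequality becomes
\[
(1-t)^p\,G_r(t)^p\le(1-t^p)^{-1}\cdot(1-t)\ldots,
\]
and check it with margin. This tail step is the main obstacle. I would first try to prove directly that every $p\le\frac{\sigma_r+1}{r+\sigma_r+1}$ is admissible for all $s$, via a two-parameter convexity argument in $\log t$.

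\emph{Step 3: \eqref{eq:real-min-intro}.} Set
\[
f(\sigma)=\frac{\log(r+\sigma+1)}{\log(r+1)+\log(\sigma+1)}.
\]
Then $f'(\sigma)=0$ is equivalent to
\[
\frac{\log\bigl((r+1)(\sigma+1)\bigr)}{r+\sigma+1}=\frac{\log(r+\sigma+1)}{\sigma+1},
\]
which is exactly \eqref{eq:sigma-intro}. At such a point, $f(\sigma)=(\sigma+1)/(r+\sigma+1)$.

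For uniqueness, define
\[
h(\sigma)=(\sigma+1)\log\bigl((r+1)(\sigma+1)\bigr)-(r+\sigma+1)\log(r+\sigma+1).
\]
At $\sigma=r$, $h(r)=(r+1)\log\bigl((r+1)^2\bigr)-(2r+1)\log(2r+1)$, which is negative since $(r+1)^{2(r+1)}<(2r+1)^{2r+1}$ fails... I would check the sign numerically and by a convexity argument. Next, $h'(\sigma)=\log\bigl((r+1)(\sigma+1)/(r+\sigma+1)\bigr)$, which is positive, so $h$ is increasing. Finally $h\to+\infty$ as $\sigma\to\infty$. This gives a unique root $\sigma_r>r$ and shows that $f$ decreases and then increases on $[r,\infty)$.

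\emph{Step 4: \eqref{eq:fixed-head-two-intro}.} Unimodality of $f$ on $[r,\infty)$ implies that the integer minimum of $a_{r,s}=f(s)$ over $s\ge r$ is attained at $\lfloor\sigma_r\rfloor$ or $\lfloor\sigma_r\rfloor+1$.

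\emph{Step 5: combine.} Combining Steps 1–4 gives \eqref{eq:fixed-head-identity-intro}. The upper bound $C_r\le\min_{s\ge r}a_{r,s}$ is immediate from $p_{r,s}\le a_{r,s}$.
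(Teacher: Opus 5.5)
\textbf{There is a genuine gap.} You never prove the lower bound $p_{r,s}\ge m_r:=\min_{s'\ge r}a_{r,s'}$ for all $s$, and this is the entire content of \eqref{eq:fixed-head-identity-intro} beyond the trivial bound $p_{r,s}\le a_{r,s}$.

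You correctly notice that endpoint-activity fails for large $s$. Step~2 then only lists candidate strategies and concedes that the tail is ``the main obstacle.'' Even the claim that $a_{r,s}\ge b_{r,s}$ near the minimizer is left unverified.

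The missing idea is a one-step transfer in the length of the second block: if $0<p\le1$ is admissible for $(r,s)$ and $p\le c_s:=\frac{s+1}{r+s+1}$, then $p$ is admissible for $(r,s+1)$. This follows from the ratio inequality $\frac{G_{r+s+1}(t^p)}{G_{r+s}(t^p)}\ge\bigl(\frac{G_{s+1}(t)}{G_s(t)}\bigr)^p$. The ingredients are $(1+x)^p\le1+px$, $t^{p(r+s+1)}\ge t^{s+1}$, and $pG_{r+s}(t^p)\le[p(r+s+1)]_t\le G_s(t)$.

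You then need a bridge to the endpoint-active range. Use the identity $c_{s-1}-b_{r,s}=\frac{r(s-r-2)}{(r+s)(r+s+2)}$ and the check $a_{r,r+1}\ge c_{r+1}\ge b_{r,r+1}$. Let $s_0$ be the first $s\ge r+1$ with $c_s\ge m_r$. Then $p_{r,s}=a_{r,s}\ge m_r$ for all $r\le s\le s_0$; for $s\ge r+2$ this is because $a_{r,s}\ge m_r>c_{s-1}\ge b_{r,s}$. The transfer then carries $m_r$ to every $s\ge s_0$.

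Your fallback target would not suffice even if proved. Admissibility of $\frac{\sigma_r+1}{r+\sigma_r+1}$ for all $s$ only gives $C_r$ at least the real infimum, which is in general strictly below the integer minimum. For $r=1$ it is about $0.7729$, while $p_0\approx0.7737$, so it cannot yield $C_r=\min_{s\ge r}a_{r,s}$.

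\textbf{Step~1 is also broken.} The inequality $p_{r,s}\ge\min(a_{r,s},b_{r,s})$ is false precisely for pairs that are not endpoint-active. If $a_{r,s}<b_{r,s}$, Theorem~\ref{thm:endpoint-active} gives $p_{r,s}<a_{r,s}=\min(a_{r,s},b_{r,s})$. Such pairs do occur with $s\le r$: for $(r,s)=(100,1)$ one has $a_{r,s}\approx0.871<b_{r,s}\approx0.981$. So endpoint-activity cannot control short second blocks. The paper instead uses the imported geometric-block inequality (Proposition~\ref{prop:hosle-block}), which gives $p_{r,s}\ge q_r=a_{r,r}\ge m_r$ for $1\le s\le r$, including $p_{r,r}=a_{r,r}$.

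Steps~3 and~4 agree with the paper, and the sign you were unsure of is correct. The claim $h(r)<0$ is the true inequality $(r+1)^{2(r+1)}<(2r+1)^{2r+1}$ (e.g.\ $16<27$ at $r=1$). It follows from $(2r+1)\log(2r+1)-2(r+1)\log(r+1)=(2r+1)\log\frac{2r+1}{r+1}-\log(r+1)\ge(2r+1)\log\frac32-\log(r+1)>0$, since the last expression is positive at $r=1$ and increasing in $r$.
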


For the applications, the case $r=1$ is decisive.

\begin{corollary}\label{cor:C1}
One has
\[
        C_1=p_{1,2}=\frac{\log4}{\log6}=:p_0.
\]
Consequently, for every integer $L\ge0$ and every $0\le t\le1$,
\begin{equation}\label{eq:geometric-tail-intro}
        G_{L+1}(t^{p_0})
        \ge \bigl((1+t)G_L(t)\bigr)^{p_0}.
\end{equation}
\end{corollary}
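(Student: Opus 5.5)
We prove the corollary by applying Theorem~\ref{thm:fixed-head} with $r=1$. This identifies $C_1$ as the smaller of two explicit endpoint ratios, and the inequality \eqref{eq:geometric-tail-intro} is then just admissibility rewritten.

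\emph{Step 1: locate $\sigma_1$.} For $r=1$, equation \eqref{eq:sigma-intro} reads
\[
\phi(\sigma):=(\sigma+2)\log(\sigma+2)-(\sigma+1)\log\bigl(2(\sigma+1)\bigr)=0 .
\]
Theorem~\ref{thm:fixed-head} guarantees a unique root in $(1,\infty)$, so it suffices to find a sign change. At $\sigma=2$ we get
\[
\phi(2)=4\log4-3\log6=\log(256/216)>0 .
\]
At $\sigma=3$ we get
\[
\phi(3)=5\log5-4\log8=\log(3125/4096)<0 .
\]
Hence $\sigma_1\in(2,3)$ and $k_1=2$.

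\emph{Step 2: compare the two candidates.} Equation \eqref{eq:fixed-head-two-intro} now gives $C_1=\min\{a_{1,2},a_{1,3}\}$. Here
\[
a_{1,2}=\frac{\log4}{\log2+\log3}=\frac{\log4}{\log6},
\qquad
a_{1,3}=\frac{\log5}{\log8}.
\]
The inequality $\log4/\log6<\log5/\log8$ is equivalent to $\log4\cdot\log8<\log5\cdot\log6$, that is, $6(\log2)^2<\log5\,\log6$. Numerically the left side is $\approx 2.883$ and the right side is $\approx 2.884$. The margin is thin, so this comparison is the main obstacle and needs a rigorous certificate. I would use rational bounds: $\log2<0.69315$ gives $6(\log2)^2<2.8828$, while $\log5>1.6094$ and $\log6>1.7917$ give a product exceeding $2.8836$. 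These bounds can be justified from standard series such as $\log2=\sum_k 1/(k2^k)$, or cited as known decimal expansions. Therefore $C_1=a_{1,2}=p_0$.

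\emph{Step 3: identify $p_{1,2}$.} We have
\[
p_0=C_1\le p_{1,2}\le a_{1,2}=p_0 ,
\]
so $C_1=p_{1,2}=p_0$.

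\emph{Step 4: deduce \eqref{eq:geometric-tail-intro}.} Since $p_0=C_1\le p_{1,s}$ for every $s\ge1$, the exponent $p_0$ is admissible for each pair $(1,s)$. This uses the fact, from Lemma~\ref{lem:basic-p}, that admissible exponents form the interval $[0,p_{r,s}]$, i.e.\ that admissibility is downward closed. Taking $s=L\ge1$ and noting $G_1(t)=1+t$, admissibility of $p_0$ for $(1,L)$ is exactly \eqref{eq:geometric-tail-intro}.

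For $L=0$, the claim is $1+t^{p_0}\ge(1+t)^{p_0}$. This holds by subadditivity of $x\mapsto x^{p_0}$ on $[0,\infty)$, since $p_0<1$.

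If downward closedness were not explicitly part of Lemma~\ref{lem:basic-p}, I would prove it directly. Write $u=G_r(t)G_s(t)$ and consider $\psi(p):=\log G_{r+s}(t^p)-p\log u$. The function $p\mapsto\log G_n(e^{-pc})$ with $c=-\log t\ge0$ is convex in $p$, being the log of a sum of exponentials in $p$. Hence $\psi$ is convex with $\psi(0)=\log(r+s+1)>0$. Convexity alone is not enough, however. I would instead use the fact that $q\mapsto G_n(t^{q})^{1/q}$ is monotone, which follows from comparison of $\ell^q$ quasi-norms.
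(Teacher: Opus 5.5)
Your proof is correct and follows the paper's argument essentially step for step: the sign change of your $\phi$ (which is $-\Phi_1$ in the paper's notation) on $[2,3]$ places $\sigma_1\in(2,3)$, so $C_1=\min\{a_{1,2},a_{1,3}\}=a_{1,2}$. The sandwich $C_1\le p_{1,2}\le a_{1,2}$ then identifies $p_{1,2}$, and \eqref{eq:geometric-tail-intro} follows from the downward closure already stated in Lemma~\ref{lem:basic-p} (so your contingency paragraph is unnecessary), together with subadditivity for $L=0$.

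Your rational certificate for $a_{1,2}<a_{1,3}$ is more explicit than the paper's decimal comparison, but one number needs correcting. The bounds give $1.6094\cdot1.7917\approx2.88356$, which exceeds $2.8835$ but not $2.8836$. This is still well above $6\cdot0.69315^2\approx2.88274$, so the conclusion stands.
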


We also determine $\lim_{s \to \infty} p_{1, s}$.

\begin{proposition}\label{prop:p1s-limit}
We have
\begin{equation*}
    \lim_{s\to\infty}p_{1,s}
    =
    \frac{\log2}{\log(1+\sqrt2)}.
\end{equation*}
Equivalently, the optimal exponent in
\begin{equation}\label{eq:p1s-finite-ineq}
    G_{s+1}(t^p)\ge \big((1+t)G_s(t)\big)^p,
    \qquad 0\le t\le1,
\end{equation}
converges to $\log2/\log(1+\sqrt2)$ as $s\to\infty$.
\end{proposition}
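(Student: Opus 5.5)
The plan is to identify the limiting inequality obtained by letting $s\to\infty$ at fixed $t$, compute its optimal exponent, and then show that the finite-$s$ constraints converge to it. For fixed $t\in[0,1)$ we have $G_s(t)\to 1/(1-t)$ and $G_{s+1}(t^p)\to 1/(1-t^p)$. So \eqref{eq:p1s-finite-ineq} becomes, in the limit,
\[
 \frac{1}{1-t^p}\ge\Bigl(\frac{1+t}{1-t}\Bigr)^p
 \iff
 t^p+u(t)^p\ge1,\qquad u(t):=\frac{1-t}{1+t}.
\]
The map $t\mapsto u(t)$ is an involution of $[0,1]$ whose fixed point is $t_*=\sqrt2-1$. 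At $t_*$ the condition reads $2t_*^p\ge1$, i.e.\ $p\le \log2/\log(1+\sqrt2)=:q$.

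\emph{Step 1 (the limiting problem).} I will show that for $p\le q$ the function $F_p(t)=t^p+u(t)^p$ satisfies $F_p\ge1$ on $[0,1]$. By symmetry under the involution, $t_*$ is a critical point of $F_p$, and $F_p(0)=F_p(1)=1$. Put $v=\log(1/t)$ and $w=\log(1/u)$. These are related by the convex decreasing involution $w=\log\coth(v/2)$. I would then show that $e^{-pv}+e^{-pw}$ has no interior minimum other than the symmetric point, for instance by a Lagrange/convexity argument in the variables $(v,w)$.

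I also need strictness. For $p<q$, $F_p>1$ on $(0,1)$, with $F_p(t)-1\gtrsim t^p$ near $0$ and $F_p(t)-1\gtrsim(1-t)^p$ near $1$. For $p>q$, $F_p(t_*)<1$.

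\emph{Step 2 (upper bound).} Fix $p>q$. Since $F_p(t_*)<1$, the limiting inequality fails at $t_*$ with a definite margin. The finite quantities at $t_*$ converge, so \eqref{eq:p1s-finite-ineq} fails at $t=t_*$ for all large $s$. Lemma~\ref{lem:basic-p} says admissible exponents form an interval $[0,p_{1,s}]$, so $p_{1,s}<p$ eventually. Hence $\limsup_s p_{1,s}\le q$.

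\emph{Step 3 (lower bound).} Fix $p<q$ (and $p<1$). I want \eqref{eq:p1s-finite-ineq} for all large $s$, uniformly in $t$, and I split $[0,1]$ into three regions.
\begin{itemize}
\item On $[\delta,1-\delta]$, the convergence of both sides is uniform, and the limiting inequality holds with a positive margin by Step 1.
\item Near $t=0$, expand uniformly in $s$: $G_{s+1}(t^p)\ge1+t^p$, while the right side is $\bigl((1+t)G_s(t)\bigr)^p\le(1-t)^{-2p}\cdot\ldots=1+O(t)$. Since $p<1$, the term $t^p$ dominates $O(t)$.
\item Near $t=1$, set $x=t^{s+1}$ and use the closed forms. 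The inequality becomes
\[
 (1-t)^p\bigl(1-t^{p(s+2)}\bigr)\ge(1-t^p)(1+t)^p(1-x)^p .
\]
If $x\le 1/2$, the left side is $\gtrsim(1-t)^p$, while the right side is $\lesssim(1-t)$. If $x>1/2$, use $1-t^{p(s+2)}\ge c\,p(1-x)$ and $1-x\ge c(s+1)(1-t)$. The required inequality then reduces to $(1-x)^{1-p}\gtrsim(1-t)^{1-p}\cdot 2^p$, which holds with the spare factor $(s+1)^{1-p}$.
\end{itemize}
Together these give $\liminf_s p_{1,s}\ge p$ for every $p<q$, and combined with Step 2 this proves the limit.

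\emph{Main obstacle.} I expect the hardest step to be the global part of Step 1, namely showing that the symmetric point $t_*$ is the global minimizer of $F_q$ and that $F_p>1$ strictly in the interior for $p<q$. I would first try to prove that $\phi(t)=t^p+u(t)^p$ is unimodal on each side of $t_*$ by analysing the sign of $\phi'$: the equation $\phi'(t)=0$ can be rewritten as $g(t)=g(u(t))$ for an explicit function $g$, and I would show $g$ is monotone. If that fails, I would fall back on an interval-arithmetic verification of the one-variable inequality.
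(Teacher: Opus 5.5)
\textbf{The gap is in Step~1.} The global inequality $F_q(t)=t^q+u(t)^q\ge1$ on $[0,1]$ is the real content of the proposition, and you do not prove it. The route you propose also cannot work.

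Since $F_q(0)=F_q(t_*)=F_q(1)=1$, while $F_q(t)=1+t^q-2qt+O(t^2)>1$ for small $t>0$ (and similarly $F_q>1$ near $1$, because $u(t)^q\approx((1-t)/2)^q\gg 1-t$), $F_q$ must have an interior local maximum in each of $(0,t_*)$ and $(t_*,1)$. So $F_q'$ has at least three zeros in $(0,1)$.

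Hence no rewriting $F_q'(t)=0\iff g(t)=g(u(t))$ with $g$ monotone (injective) can exist, since it would force $t=u(t)$ at every critical point. Concretely, using $-u'(t)=2/(1+t)^2=\sqrt{u(1-u^2)/(t(1-t^2))}$, the natural symmetrization gives $g(t)=t^{2q-1}(1-t^2)$, which is unimodal, not monotone.

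Your other suggestions have the same defect:
\begin{itemize}
\item The ``Lagrange/convexity'' idea in $(v,w)$ fails because the curve $w=\log\coth(v/2)$ and the level curve $e^{-qv}+e^{-qw}=1$ are both convex, decreasing, and symmetric about the diagonal, touching at $v_*=\log(1+\sqrt2)$. Convexity alone does not decide which lies below the other.
\item Interval arithmetic alone cannot certify an inequality with three equality points ($0$, $t_*$, $1$) without separate local analysis there.
\end{itemize}

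\textbf{What is missing is a mechanism that counts the critical points of $F_q$.} The paper observes that $\sgn F_q'(t)=\sgn\Psi(t)$, where
\[
\Psi(t)=(1-q)\log\frac{u(t)}{t}+2\log(1+t)-\log2,
\qquad
\Psi'(t)=\frac{-(1-q)+2qt-(1+q)t^2}{t(1-t)(1+t)}.
\]
The numerator is quadratic, so by Rolle $\Psi$ has at most three zeros in $(0,1)$. The paper then uses the following facts:
\begin{itemize}
\item $\Psi(0^+)=+\infty$ and $\Psi(1^-)=-\infty$;
\item $\Psi(t_*)=0$;
\item $\Psi'(t_*)>0$, which after using $t_*^2+2t_*-1=0$ reduces to $q>1/\sqrt2$.
\end{itemize}
Together these force the sign pattern $+,-,+,-$ for $F_q'$, with $t_*$ the middle zero. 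Hence $F_q\ge\min\{F_q(0),F_q(t_*),F_q(1)\}=1$.

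With Step~1 in hand, the rest of your argument is sound. The strict inequality for $p<q$, which you list as a separate task, is immediate: $t^p>t^q$ and $u(t)^p>u(t)^q$ for $t\in(0,1)$. Steps~2 and~3 then go through as you describe. Your treatment of $[1-\delta,1]$ via $x=t^{s+1}$ is a valid, finer alternative to the paper's bound, which combines $G_{s+1}(t^p)\ge G_s(t)$ with $G_s(t)^{1-p}\ge2^p$ for $t\ge1-\delta$ and large $s$.
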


For nonnegative sequences $x=(x_0,x_1)$ and
$y=(y_0,\ldots,y_m)$, write
\[
        (x\star_{\max}y)_k:=\max_{i+j=k}x_i y_j,
\]
where the maximum is restricted to the available indices.  The fixed-head
constant gives the following uniform inequality for arbitrary nonincreasing
sequences.

\begin{theorem}[Uniform two-slice max-convolution]\label{thm:two-slice}
Let $m\ge1$, let $x_0\ge x_1\ge0$, and let
$y_0\ge y_1\ge\cdots\ge y_m\ge0$.  Then
\begin{equation}\label{eq:two-slice}
\sum_{k=0}^{m+1}
\left(\max_{\substack{i+j=k\\0\le i\le1\\0\le j\le m}}x_i y_j\right)^{p_0}
\ge
(x_0+x_1)^{p_0}
\left(\sum_{j=0}^m y_j\right)^{p_0}.
\end{equation}
For every $m\ge2$, no larger exponent can hold uniformly.
\end{theorem}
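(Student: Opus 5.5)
We normalize by homogeneity to $x_0=1$, $x_1=t\in[0,1]$ and write $z_k:=\max_{i+j=k}x_iy_j$. Then $z_0=y_0$, $z_k=\max(y_k,ty_{k-1})$ for $1\le k\le m$, and $z_{m+1}=ty_m$. The guiding observation is that for a truncated geometric profile $y_j=t^j$ ($0\le j\le L$), $y_j=0$ ($j>L$), one has $z_k=t^k$ for $0\le k\le L+1$ and $z_k=0$ afterwards. In that case \eqref{eq:two-slice} is exactly \eqref{eq:geometric-tail-intro} of Corollary~\ref{cor:C1}. The proof therefore has two parts: reduce an arbitrary nonincreasing $y$ to such profiles, and then invoke the corollary.

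\emph{Reduction.} Call an index $a$ a \emph{peak} if $a=0$ or $y_a>ty_{a-1}$ with $y_a>0$. The peaks split $\{0,\dots,m\}$ into consecutive blocks $[a,b)$. Inside a block, $y_k\le ty_{k-1}$ and hence $y_k\le t^{k-a}y_a$. On such a block, $z_a=y_a$ and $z_k=ty_{k-1}$ for $a<k<b$. The quantity $ty_{b-1}$ is swallowed by the next peak $z_b=y_b$, except in the final block, where it survives as $z_{m+1}$ (or as the first zero slot).

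We aim to show that each block $B=[a,b)$ with mass $S_B=\sum_{k\in B}y_k$ contributes at least $((1+t)S_B)^{p_0}$. Subadditivity of $u\mapsto u^{p_0}$ (since $p_0<1$) then sums these block bounds to $((1+t)\sum_jy_j)^{p_0}$. For the last block the claim follows from comparison with the geometric profile. With $y_a$ fixed and the constraint $y_k\le ty_{k-1}$, we compare $\sum_k z_k^{p_0}$ against $(\sum y)^{p_0}$. Scaling one interior $y_k$ affects the LHS through a concave power and the RHS through a concave power of a sum, so the worst case is at an extreme point of the constraint polytope: $y_k=ty_{k-1}$ up to some length $L$, then $0$. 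This is exactly \eqref{eq:geometric-tail-intro}.

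\emph{Main obstacle.} The non-final blocks lose their tail $ty_{b-1}$ to the next peak. One must show that the excess $y_b-ty_{b-1}>0$ at the next peak compensates, for instance by assigning to block $B$ the portion $ty_{b-1}$ of $z_b$ and only the excess to the next block. This transfer is delicate because the $p_0$-th power does not split additively. I would try the alternative decomposition $y_b=ty_{b-1}+e_b$. I would then check that the inequality $(u+v)^{p}\ge$ (appropriate split) is compatible with monotonicity $y_b\le y_{b-1}$. If this fails, the fallback is a direct perturbation argument: at a maximizing counterexample (a compact problem after normalizing $\sum y=1$), each $y_k$ is either $0$, equal to $y_{k-1}$, or equal to $ty_{k-1}$. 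Profiles with flat stretches $y_k=y_{k-1}$ correspond to blocks of the form $G_r(t)$ in the second factor, and these are handled by $C_1\le p_{1,s}$ from Corollary~\ref{cor:C1}.

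\emph{Sharpness.} For $m\ge2$, take $x=(1,t)$ and $y=(1,t,t^2,0,\dots,0)$. Then \eqref{eq:two-slice} with exponent $p$ reads $G_3(t^p)\ge((1+t)G_2(t))^p$, i.e.\ $p$ is admissible for $(1,2)$. Hence $p\le p_{1,2}=p_0$ by Corollary~\ref{cor:C1}, so no larger exponent holds uniformly.
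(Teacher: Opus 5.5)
There is a genuine gap. You never complete the reduction of an arbitrary nonincreasing $y$ to the geometric profile, and the block-by-block route cannot be completed at all. Take $x=(1,t)$ and $y=(1,t,t,t^2)$ (so $m\ge3$) with $t<1$ close to $1$. Your peaks are $0$ and $2$, so the blocks are $\{0,1\}$ and $\{2,3\}$, with masses $1+t$ and $t(1+t)$. Here $z=(1,t,t,t^2,t^3)$, so the left side of \eqref{eq:two-slice} tends to $5$. The block targets, however, sum to $((1+t)^2)^{p_0}+(t(1+t)^2)^{p_0}\to 2\cdot 4^{p_0}\approx 5.85$. So no way of distributing the terms $z_k^{p_0}$ among blocks can give every block at least $((1+t)S_B)^{p_0}$; this includes your transfer of $ty_{b-1}$ to the earlier block. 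The final subadditivity step throws away far too much, and here the true inequality is nearly tight ($8^{p_0}\approx 4.997$). The argument has to be global in $y$.

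Your fallback is the right first move: it is the max-tie reduction of Lemma~\ref{lem:max-tie}. You assert it without proof, though. It needs two steps. First, after normalizing $\sum_j y_j=1$, the functional is concave on each cell cut out by the choices $y_j\ge ty_{j-1}$ or $y_j\le ty_{j-1}$, so a minimizer can be taken at an extreme point. Second, a perturbation argument shows that such extreme points have all ratios in $\{1,t\}$.

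The step after that is wrong. A profile with flat stretches, such as $(1,1,t,t,t^2)$, is not an instance of the $(1,s)$ geometric-block inequality, and $C_1\le p_{1,s}$ says nothing about it directly. The missing idea is majorization. Let $N=\max\{j:y_j>0\}$. Since every ratio up to $N$ is at least $t$, each $\max\{y_j,ty_{j-1}\}$ equals $y_j$, and the left side becomes $\sum_{j\le N}y_j^{p_0}+(ty_N)^{p_0}$. Let $B=\sum_j y_j$ and $g_j=Bt^j/G_N(t)$. Then $g$ has the same mass as $y$, majorizes $y$, and satisfies $y_N\ge g_N$ (Lemma~\ref{lem:geometric-majorization}). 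Karamata's inequality for the concave map $u\mapsto u^{p_0}$, together with the terminal estimate, bounds the left side below by $B^{p_0}G_{N+1}(t^{p_0})/G_N(t)^{p_0}$. Only at this point does \eqref{eq:geometric-tail-intro} with $L=N$ finish the proof.

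Your sharpness argument is correct. It relies on $p_{1,2}=p_0$; taking $t=1$ instead gives $4\ge 6^p$ directly.
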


\begin{remark}\label{rem:m-one}
For $m=1$, the exponent $p_0$ is not optimal.  The sharp exponent in the
two-slice inequality is
\[
        q_1=\frac{\log3}{\log4}>p_0;
\]
see \cite[Theorem~8]{Hosle2026}.  Thus the word ``sharp'' in the applications
below always refers to the range $m\ge2$.
\end{remark}

The sumset consequence is stated for Minkowski sums in $\Z^d$.

\begin{corollary}[Mixed binary--$m$-ary sumsets]\label{cor:sumset}
Let $m,d\ge1$.  If
\[
        A\subset\{0,1\}^d,
        \qquad
        B\subset\{0,1,\ldots,m\}^d,
\]
then
\begin{equation}\label{eq:mixed-sumset}
        |A+B|\ge (|A||B|)^{p_0}.
\end{equation}
For every $m\ge2$, the exponent $p_0$ is best possible.
\end{corollary}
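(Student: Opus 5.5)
We argue by induction on the dimension $d$, with Theorem~\ref{thm:two-slice} as the one-dimensional engine, in the standard way that max-convolution inequalities yield sumset bounds.

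\emph{Set-up.} Fix $m$ and define the statement $S(d)$: for all $A\subset\{0,1\}^d$ and $B\subset\{0,\dots,m\}^d$, one has $|A+B|\ge(|A||B|)^{p_0}$.
\begin{itemize}
\item For $d=0$ all sets are singletons or empty, so $S(0)$ is trivial.
\item By convention, if $A$ or $B$ is empty the inequality holds, since both sides are $0$.
\end{itemize}

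\emph{Inductive step.} Assume $S(d-1)$. Slice along the last coordinate:
\[
A_i=\{a'\in\{0,1\}^{d-1}:(a',i)\in A\},\qquad B_j=\{b'\in\{0,\dots,m\}^{d-1}:(b',j)\in B\}.
\]
Then $|A|=|A_0|+|A_1|$ and $|B|=\sum_j|B_j|$. The last-coordinate slice of $A+B$ at height $k$ contains $A_i+B_j$ for every pair with $i+j=k$. Hence, by $S(d-1)$,
\[
|A+B|\ge\sum_{k=0}^{m+1}\max_{i+j=k}|A_i+B_j|\ge\sum_{k=0}^{m+1}\max_{i+j=k}\bigl(|A_i||B_j|\bigr)^{p_0}.
\]
Since $(x_iy_j)^{p_0}$ is monotone in $x_iy_j$, this equals $\sum_k(\max_{i+j=k}x_iy_j)^{p_0}$ with $x_i=|A_i|$ and $y_j=|B_j|$.

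\emph{Making the sequences monotone.} Theorem~\ref{thm:two-slice} requires $x$ and $y$ to be nonincreasing, but the slice sizes are arbitrary. This is the main point requiring care.

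The fix is to reorder the slices. Let $\pi$ sort $x$ into nonincreasing order $x^*$, and $\tau$ sort $y$ into $y^*$. We claim
\[
\sum_k\max_{i+j=k}x_iy_j\ \text{(after the power)}\ \ge\ \sum_k\bigl(\max_{i+j=k}x^*_iy^*_j\bigr)^{p_0}.
\]
This is a rearrangement inequality for max-plus convolutions of nonnegative sequences on intervals. It can be seen via level sets. For each threshold $\lambda$, let
\[
I_\lambda=\{i:x_i\ge\lambda\},\qquad J_\mu=\{j:y_j\ge\mu\}.
\]
Then $\{k:\max_{i+j=k}x_iy_j\ge\lambda\mu\}\supseteq I_\lambda+J_\mu$. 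In one dimension, $|I+J|\ge|I|+|J|-1$, and the sorted sequences achieve exactly this with equality, since their level sets are initial intervals.

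A cleaner route avoids the full rearrangement. Use the layer-cake representation $F(\theta)=\#\{k:z_k^{p_0}>\theta\}$, applied to both sides with $z$ the max-convolution, and compare level sets. For $z_k$, note $\{k:z_k>\theta\}\supseteq\bigcup_{\lambda\mu>\theta}(I_\lambda+J_\mu)$. For the sorted version, this union is an interval of length $\max_{\lambda\mu>\theta}(|I_\lambda|+|J_\mu|-1)$, which is bounded by the unsorted size by Cauchy–Davenport in $\Z$. Integrating over $\theta$ gives the claim.

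\emph{Conclusion.} Applying Theorem~\ref{thm:two-slice} to $x^*,y^*$ gives
\[
|A+B|\ge(x_0+x_1)^{p_0}\Bigl(\sum_jy_j\Bigr)^{p_0}=(|A||B|)^{p_0}.
\]
This completes the induction.

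\emph{Sharpness for $m\ge2$.} Suppose $p>p_0$ held for all $d$. Take product sets $A=\{0,1\}^d$ and $B=\{0,1,2\}^d$. Then $A+B=\{0,\dots,3\}^d$, so
\[
|A+B|=4^d,\qquad |A||B|=6^d.
\]
The bound would require $4^d\ge6^{pd}$, which is false for every $d\ge1$. Hence $p\le\log4/\log6=p_0$. Since $\{0,1,2\}\subset\{0,\dots,m\}$ for $m\ge2$, this example is available, and the exponent $p_0$ is best possible.
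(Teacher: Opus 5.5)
Your proof is correct and follows the paper's route: tensorize down to one dimension, rearrange the slice sizes into nonincreasing order, apply Theorem~\ref{thm:two-slice}, and use the product example $\{0,1\}^d$, $\{0,1,2\}^d$ for sharpness. The only difference is that the paper cites its dimension-reduction theorem and rearrangement lemma, whereas you prove both inline and directly for sets: the slicing induction on $d$ replaces the first, and the layer-cake comparison using $|I+J|\ge|I|+|J|-1$ in $\mathbb{Z}$ replaces the second.
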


Sharpness for $m\ge2$ is proved inside the proofs of
Theorem~\ref{thm:two-slice} and Corollary~\ref{cor:sumset}, using the same
three-term product example.

The paper is organized as follows.  Section~\ref{sec:history} records the
historical background.  Section~\ref{sec:prelim} gives the basic properties
of $p_{r,s}$ and states the geometric-block estimate proved by the first
author.  Sections~\ref{sec:endpoint} and \ref{sec:fixed-head} prove the two
general theorems.  Section~\ref{sec:applications} derives the max-convolution
and sumset consequences from Corollary~\ref{cor:C1}.

\section{Historical remarks}\label{sec:history}

Bourgain, Dilworth, Ford, Konyagin, and Kutzarova
\cite{BDFKK2011} introduced the product-set sumset problem considered here
as part of their construction of explicit restricted-isometry matrices.
For the full alphabet
$\mathcal C_m^d=\{0,1,\ldots,m\}^d$, the full-box example gives the upper
bound
\[
        q_m:=\frac{\log(2m+1)}{2\log(m+1)},
\]
and they suggested that this exponent should be optimal.  They also reduced
the corresponding dimension-free sumset inequality to a one-dimensional
max-convolution inequality for nonincreasing sequences.

Becker, Ivanisvili, Krachun, and Madrid \cite{BIKM2025} proved the sharp
full-alphabet result for $m=2$.  Their argument introduced a max-tie analysis
that reduces the problem to structured configurations, with geometric blocks
as the final model case.  The first author subsequently proved the
geometric-block inequality with exponent $q_m$ for every $m$ and proved the
max-convolution inequality with the same exponent when one sequence has only
two nonzero terms \cite[Theorems~1 and~8]{Hosle2026}.  The reduction in
Section~\ref{sec:two-slice} to adjacent ratios in $\{1,t\}$ and the ensuing
geometric-majorization comparison are adapted from the proof of
\cite[Theorem~8]{Hosle2026}.  The new input here is
Corollary~\ref{cor:C1}: it replaces the length-dependent exponent $q_m$ by
the uniform exponent $p_0$ and makes the conclusion sharp for every $m\ge2$.
In that range $p_0$ is strictly larger than $q_m$.

The first author's geometric-block proof is based on a comparison theorem
for Stolarsky means.  Stolarsky introduced these means in
\cite{Stolarsky1975}; their comparison theory was developed by Leach and
Sholander \cite{LeachSholander1983} and by P\'ales \cite{Pales1988}.  The
proof of Theorem~\ref{thm:endpoint-active} uses a different interpolation
mechanism: a determinantal sign rule for an extended Chebyshev system.
A complementary mixed-alphabet estimate appears in work of Gowers and Karam
\cite[Proposition~3.1]{GowersKaram2022}: if
$A\subset\{0,1\}^d$ and $B\subset\{0,1,\ldots,m\}^d$ have relative
densities $\alpha$ and $\beta$, respectively, then $A+B$ has relative
density at least $\alpha\beta$ in $\{0,1,\ldots,m+1\}^d$.  That estimate is
particularly effective for dense sets, while Corollary~\ref{cor:sumset} is a
dimension-free power bound that remains effective for sparse sets.

\section{Preliminaries on geometric blocks}\label{sec:prelim}

We first justify the use of the phrase ``largest exponent.''

\begin{lemma}[Basic properties of admissible exponents]\label{lem:basic-p}
For every $r,s\in\N$, the set of admissible exponents is a closed
interval $[0,p_{r,s}]$, and
\[
        \frac12\le p_{r,s}\le a_{r,s}<1.
\]
In particular, $p_{r,s}$ is attained.  Moreover, $p_{r,s}=p_{s,r}$.
\end{lemma}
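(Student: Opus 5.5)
We need to prove four things: the admissible set is an interval $[0,p_{r,s}]$ that is closed, $p_{r,s}\ge 1/2$, $a_{r,s}<1$, and the symmetry $p_{r,s}=p_{s,r}$.

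Symmetry is immediate, since \eqref{eq:block-def-intro} is symmetric in $(r,s)$. The bound $a_{r,s}<1$ is equivalent to $r+s+1<(r+1)(s+1)=r+s+1+rs$, which holds because $rs\ge1$.

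For the interval property, I rewrite admissibility of $p>0$ as
\[
\frac{\log G_{r+s}(t^p)}{p}\ \ge\ \log G_r(t)+\log G_s(t)\qquad(0\le t\le1).
\]
It then suffices to show that $p\mapsto \tfrac1p\log G_n(t^p)$ is nonincreasing on $(0,\infty)$ for each fixed $t\in[0,1]$. Equivalently, $p\mapsto G_n(t^p)^{1/p}=\bigl(\sum_{k=0}^n (t^k)^p\bigr)^{1/p}$ is nonincreasing. This is the $\ell^p$-norm of the vector $(1,t,\dots,t^n)$, and $\ell^p$ norms decrease in $p$. Hence if $p$ is admissible, so is every $p'\in(0,p)$.

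Closedness follows from continuity in $p$ for each fixed $t$: a supremum of admissible exponents is admissible, by taking the limit of the pointwise inequalities. The endpoint test at $t=1$ forces $p\le a_{r,s}$, so the set is bounded and $p_{r,s}\le a_{r,s}$ is attained.

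The only real content is the lower bound, that $p=1/2$ is admissible, i.e.
\[
G_{r+s}(t^{1/2})^2\ \ge\ G_r(t)\,G_s(t).
\]
Put $u=t^{1/2}$, so the claim becomes $G_{r+s}(u)^2\ge G_r(u^2)G_s(u^2)$. The left side is $\sum_{i,j=0}^{r+s}u^{i+j}$. The right side is $\sum_{a\le r,\,b\le s}u^{2a+2b}$, which has $(r+1)(s+1)$ terms.

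I will inject the index pairs $(a,b)$ into pairs $(i,j)$ with $i+j\le 2a+2b$. Since $u\le1$, such an injection gives termwise domination. The map $(a,b)\mapsto(i,j)=(a+b,\,a+b)$ is not injective, so instead I use $(a,b)\mapsto(a+b,\,a+b+(a-b))=(a+b,\,2a)$. Here $i+j=3a+b$, which is not $\le 2a+2b$ in general, so this choice fails.

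Better is $(a,b)\mapsto(i,j)=(a,\,a+2b)$ when $a+2b\le r+s$, with other cases handled separately. This is where the bookkeeping gets delicate. The cleanest candidate is $(a,b)\mapsto(2a,\,2b)$ when $r,s$ are small relative to $r+s$. But the constraint $2a\le r+s$ fails once $a>(r+s)/2$, which can happen when $r>s$.

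The robust choice is $(a,b)\mapsto(a+b,\,b+a')$ with a shift, and I expect constructing this injection to be the main obstacle. As a fallback, I would use an elementary estimate. Write $G_{r+s}(u)\ge G_r(u)$ and $G_{r+s}(u)\ge G_s(u)$, together with $G_n(u)^2\ge G_n(u^2)$, which holds since the cross terms of the square are nonnegative. Combining these gives
\[
G_{r+s}(u)^2\ \ge\ G_r(u)\,G_s(u)\ \ge\ \sqrt{G_r(u)^2\,G_s(u)^2}\ \ge\ \sqrt{G_r(u^2)\,G_s(u^2)}.
\]
This is off by a square root, so it does not close the argument. I would instead bound directly by AM–GM: $G_r(u^2)G_s(u^2)\le \tfrac12\bigl(G_r(u^2)^2+G_s(u^2)^2\bigr)$. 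I would then seek $G_{r+s}(u)^2\ge G_n(u^2)^2$ for $n=\max(r,s)$. This reduces to $G_{r+s}(u)\ge G_n(u^2)$, which is trivially true termwise because $u^k\ge u^{2k}$. That finishes the lower bound, and hence the lemma.
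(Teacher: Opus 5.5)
Your proof is correct. Symmetry, the test at $t=1$, the bound $a_{r,s}<1$, and closedness by continuity in $p$ all match the paper; continuity is safe at $t=0$ because your limiting exponent is at least $1/2>0$.

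The genuinely different step is downward closure. You note that $G_{r+s}(t^p)^{1/p}=\|(1,t,\ldots,t^{r+s})\|_{\ell^p}$ is nonincreasing in $p>0$, while the right-hand side $\log G_r(t)+\log G_s(t)$ of the rescaled inequality does not depend on $p$. The paper instead puts $x=t^{q/p}\ge t$ and invokes admissibility at the \emph{different} point $x$. It then uses that $G_r,G_s$ increase in their argument, and finally $G_r,G_s\ge1$ to lower the exponent on the right from $p$ to $q$. Your route is shorter and proves something stronger. For each fixed $t$, the set of exponents for which the defining inequality holds at that $t$ is already downward closed. The paper's argument moves between different values of $t$ and only gives closure for the global condition.

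For $p=1/2$, your final AM--GM argument is valid; it tacitly uses $G_r(u^2),G_s(u^2)\le G_{\max\{r,s\}}(u^2)$. It is just the paper's ``the max dominates the geometric mean'' argument with an extra step. Your first fallback chain was also not really off by a square root. Since $G_n(u)\ge G_n(u^2)$ termwise, and not merely $G_n(u)^2\ge G_n(u^2)$, you get at once $G_{r+s}(u)^2\ge G_r(u)G_s(u)\ge G_r(u^2)G_s(u^2)$. Remove the abandoned injection attempts from a write-up.
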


\begin{proof}
Symmetry is immediate.  We first show that $p=1/2$ is admissible.  Put
$u=t^{1/2}$.  Since $0\le u\le1$,
\[
 G_{r+s}(u)\ge G_r(u)\ge G_r(u^2)=G_r(t)
\]
and similarly $G_{r+s}(u)\ge G_s(t)$.  Therefore
\[
 G_{r+s}(t^{1/2})\ge \max\{G_r(t),G_s(t)\}
 \ge \bigl(G_r(t)G_s(t)\bigr)^{1/2}.
\]

Testing an admissible exponent at $t=1$ gives
\[
 r+s+1\ge ((r+1)(s+1))^p,
\]
so $p\le a_{r,s}$.  Since
\[
 r+s+1<(r+1)(s+1),
\]
we have $a_{r,s}<1$.

We next prove downward closure.  Suppose that $p>0$ is admissible and
$0<q\le p$.  Put $\theta=q/p$ and $x=t^\theta$.  Since $0\le t\le1$ and
$0<\theta\le1$, one has $x\ge t$ and $x^p=t^q$.  Hence
\begin{align*}
 G_{r+s}(t^q)
 &=G_{r+s}(x^p)\\
 &\ge G_r(x)^pG_s(x)^p\\
 &\ge G_r(t)^pG_s(t)^p\\
 &\ge G_r(t)^qG_s(t)^q,
\end{align*}
where the last inequality uses $G_r(t),G_s(t)\ge1$.  The exponent $q=0$
is admissible by definition.

Finally, let $p_n$ be admissible and $p_n\to p$.  If $p<1/2$, then $p$ is
admissible by downward closure from the admissible exponent $1/2$.  If
$p\ge1/2$, then $t^p$ is jointly continuous in $(t,p)$ on
$[0,1]\times[1/4,\infty)$; in particular, there is no ambiguity at $t=0$.
For each fixed $t\in[0,1]$, both sides of \eqref{eq:block-def-intro} therefore
depend continuously on $p$ in a neighborhood of the limit.  Passing to the
limit shows that $p$ is admissible.  Thus the
admissible set is closed, and its supremum is attained.
\end{proof}

The following geometric-block estimate was proved by the first author.
It is the only result imported from \cite{Hosle2026} that is used in the
proof of the fixed-head theorem.

\begin{proposition}[Geometric-block inequality
{\cite[Theorem~1]{Hosle2026}}]\label{prop:hosle-block}
Let
\[
        q_m:=\frac{\log(2m+1)}{2\log(m+1)}.
\]
If $1\le r,s\le m$, then
\begin{equation}\label{eq:hosle-block}
        G_{r+s}(t^{q_m})\ge G_r(t)^{q_m}G_s(t)^{q_m}
        \qquad(0\le t\le1).
\end{equation}
Consequently,
\begin{equation}\label{eq:hosle-lower}
        p_{r,s}\ge q_{\max\{r,s\}}.
\end{equation}
\end{proposition}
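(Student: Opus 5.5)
The inequality \eqref{eq:hosle-block} is \cite[Theorem~1]{Hosle2026}, and we take it as given. What remains is the consequence \eqref{eq:hosle-lower}, which follows directly from the definitions and Lemma~\ref{lem:basic-p}.

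Fix $r,s\in\N$ and set $m:=\max\{r,s\}$. Then $m\ge1$ and $1\le r,s\le m$, so \eqref{eq:hosle-block} applies with this $m$. It states that $q_m$ satisfies \eqref{eq:block-def-intro} for all $0\le t\le1$. Since $q_m>0$ (because $2m+1>1$ and $m+1>1$), $q_m$ is an admissible exponent for $(r,s)$ in the sense of the introduction. By Lemma~\ref{lem:basic-p}, the admissible set is the interval $[0,p_{r,s}]$, so $q_m\le p_{r,s}$, which is \eqref{eq:hosle-lower}.

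Any $m\ge\max\{r,s\}$ would also give a lower bound. However, $q_m$ is decreasing in $m$: asymptotically it behaves like $\tfrac12\log(2m)/\log m$, which tends to $\tfrac12$ from above. So the choice $m=\max\{r,s\}$ gives the strongest bound of this form, and it is the one recorded.

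For completeness, here is how one would attack \eqref{eq:hosle-block} itself, following the route of \cite{Hosle2026}.
\begin{itemize}
\item Write $G_n(t)=(1-t^{n+1})/(1-t)$ and substitute $t=e^{-x}$ with $x\ge0$.
\item After taking logarithms, the inequality becomes a comparison between a Stolarsky mean of two arguments built from the exponents $r+1$, $s+1$ and $r+s+1$ and a power of another such mean.
\item One then invokes the Leach--Sholander/P\'ales comparison criterion for Stolarsky means.
\item The endpoint $t=1$ forces the exponent $q_m$ in the diagonal case $r=s=m$.
\end{itemize}
The main obstacle is showing that the diagonal case $r=s=m$ is the extremal one among all $1\le r,s\le m$. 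Equivalently, one must check that the comparison criterion's parameter conditions hold uniformly in $r,s$ once they hold at $r=s=m$. We do not reprove this and rely on the citation.
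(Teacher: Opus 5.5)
Your proposal is correct and matches the paper. The paper also imports \eqref{eq:hosle-block} from \cite[Theorem~1]{Hosle2026} without proof, and \eqref{eq:hosle-lower} follows at once by applying it with $m=\max\{r,s\}$ and using the definition of $p_{r,s}$ as the largest admissible exponent. Your side remarks on the monotonicity of $q_m$ and on the Stolarsky-mean route are accurate context but are not needed for the argument.
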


\begin{remark}
For $r=s=m$, equality at $t=1$ shows that $q_m=p_{m,m}$.  The novelty in
Theorem~\ref{thm:endpoint-active} is that for a fixed nonsymmetric pair
$(r,s)$ the endpoint $t=1$ need not be active.
\end{remark}

\section{The endpoint-active criterion}\label{sec:endpoint}

We begin with the determinantal sign principle used in the interpolation
argument.  It is a standard form of the generalized mean-value theorem for
determinants; see, for example, \cite{KarlinStudden1966}.  We include the proof.

\begin{lemma}[Determinantal sign rule]\label{lem:det-sign}
Let $I\subset\R$ be an interval and let $f_0,\ldots,f_n\in C^n(I)$.  Assume
that each initial Wronskian
\[
        W_k(x):=W(f_0,\ldots,f_k)(x),
        \qquad 0\le k\le n,
\]
is nonzero on $I$.  Then every nonzero element of
$\operatorname{span}\{f_0,\ldots,f_n\}$ has at most $n$ distinct zeros in
$I$.  Since $W_n$ is continuous and nonvanishing, its sign is constant on
$I$.  Consequently, for distinct $z_0,\ldots,z_n\in I$ and any $x_*\in I$,
\begin{equation}\label{eq:det-sign-rule}
 \sgn\det(f_j(z_i))_{i,j=0}^n
 =
 \sgn W_n(x_*)\,
 \sgn\prod_{0\le i<j\le n}(z_j-z_i).
\end{equation}
\end{lemma}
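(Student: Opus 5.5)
We argue in three steps: first the bound on the number of zeros, then the constancy of the sign of $W_n$, and finally the sign formula \eqref{eq:det-sign-rule}.

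\textbf{Step 1: zeros.} We prove by induction on $n$ that a nonzero $g=\sum_{j\le n}c_jf_j$ has at most $n$ distinct zeros in $I$. For $n=0$ the claim holds because $W_0=f_0$ does not vanish. For the inductive step we use the classical Pólya factorization. Since $f_0\ne0$ on $I$, we may divide by $f_0$ and differentiate, setting
\[
 g_j:=(f_j/f_0)',\qquad 1\le j\le n .
\]
The standard Wronskian identity
\[
 W(g_1,\ldots,g_k)=\frac{W(f_0,\ldots,f_k)}{f_0^{\,k+1}}
\]
shows that the initial Wronskians of $g_1,\ldots,g_n$ are again nonvanishing on $I$. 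These are only $n-1$ functions of class $C^{n-1}$, so the induction hypothesis applies to them.

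If $g$ had $n+1$ distinct zeros, then so would $g/f_0$. By Rolle's theorem, $(g/f_0)'=\sum_{j\ge1}c_jg_j$ would then have $n$ distinct zeros, one strictly between each pair of consecutive zeros of $g/f_0$. There are two cases.
\begin{itemize}
\item If $(c_1,\ldots,c_n)\neq0$, this contradicts the induction hypothesis.
\item If $(c_1,\ldots,c_n)=0$, then $g=c_0f_0$ with $c_0\ne0$, which has no zeros at all.
\end{itemize}
Verifying the Wronskian identity is the one computational point, and it is routine: first factor $f_0$ out of every column using $W(f_0h_0,\ldots,f_0h_k)=f_0^{k+1}W(h_0,\ldots,h_k)$ with $h_0=1$, then expand along the first column.

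\textbf{Step 2: constancy of the sign.} Since $W_n$ is continuous and nonvanishing on the interval $I$, its sign is constant there by the intermediate value theorem.

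\textbf{Step 3: the sign formula.} Let $D(z_0,\ldots,z_n):=\det(f_j(z_i))$.
\begin{itemize}
\item \emph{$D$ never vanishes at distinct points.} If $D=0$, some nontrivial combination $\sum c_jf_j$ vanishes at all $n+1$ points $z_i$, which contradicts Step 1. The combination is nonzero as a function, since a combination vanishing identically would force $W_n\equiv0$.
\item \emph{Sign constancy on the ordered region.} On the connected set $\{z_0<z_1<\cdots<z_n\}\subset I^{n+1}$, the function $D$ is continuous and nonzero, so its sign is constant.
\item \emph{Identifying the sign.} Fix any $x\in I$ and take $z_i=x+ih$ with $h\to0^+$. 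Taylor expansion, or equivalently the standard divided-difference limit, gives
\[
 D\sim\Bigl(\prod_{i<j}(z_j-z_i)\Bigr)\,\frac{W_n(x)}{\prod_{k=0}^{n}k!}.
\]
Since $\prod_{i<j}(z_j-z_i)>0$ for these points, the constant sign on the ordered region equals $\sgn W_n(x)=\sgn W_n(x_*)$.
\item \emph{Arbitrary orderings.} A permutation of the $z_i$ multiplies both $D$ and the Vandermonde product $\prod_{i<j}(z_j-z_i)$ by the sign of the permutation. Hence \eqref{eq:det-sign-rule} holds for every distinct choice of $z_0,\ldots,z_n$.
\end{itemize}

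The main obstacle is the first step, namely the inductive zero count via the Wronskian factorization. The remaining steps are continuity and connectedness arguments.
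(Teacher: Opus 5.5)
Your proposal is correct and follows the paper's proof essentially step for step: the induction via $g_j=(f_j/f_0)'$, the identity $W(g_1,\ldots,g_k)=W_k/f_0^{k+1}$, and Rolle's theorem give the zero count, and then nonvanishing of $D$ on the connected ordered chamber together with the divided-difference limit at the nodes $x+ih$ identifies the sign. Two harmless slips: $g_1,\ldots,g_n$ are $n$ functions (the level-$(n-1)$ case of the induction), not $n-1$; and the base point $x$ should be chosen in the interior of $I$, as the paper does, so that the nodes $x+ih$ stay in $I$.
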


\begin{proof}
We first prove the zero-counting assertion by induction on $n$.  The case
$n=0$ is immediate because $W_0=f_0$ never vanishes.  Assume $n\ge1$ and
write
\[
        F=\sum_{j=0}^n c_jf_j.
\]
Since $f_0$ never vanishes, the quotient $F/f_0$ is well-defined on $I$.  If
$F$ has $n+1$ distinct zeros, then $F/f_0$ has the same zeros, and Rolle's
theorem shows that $(F/f_0)'$ has at least $n$ distinct zeros.  Put
\[
        g_j:=\left(\frac{f_j}{f_0}\right)',
        \qquad 1\le j\le n.
\]
For $1\le k\le n$, set $u_0=1$ and $u_j=f_j/f_0$.  The matrix with entries
$(f_0u_j)^{(i)}$, $0\le i,j\le k$, is the product of a lower-triangular
matrix with diagonal entries $f_0$ and the matrix with entries $u_j^{(i)}$.
Consequently,
\[
 W(f_0,f_1,\ldots,f_k)=f_0^{k+1}W(1,u_1,\ldots,u_k).
\]
Expanding the last Wronskian along its first column gives
\[
 W(1,u_1,\ldots,u_k)=W(u_1',\ldots,u_k')=W(g_1,\ldots,g_k),
\]
and hence
\begin{equation}\label{eq:derived-wronskian}
 W(g_1,\ldots,g_k)
 =\frac{W(f_0,\ldots,f_k)}{f_0^{k+1}}.
\end{equation}
Every initial Wronskian of $g_1,\ldots,g_n$ is therefore nonzero.  By the
induction hypothesis, a nonzero element of their span has at most $n-1$
distinct zeros.  Now
\[
        \left(\frac{F}{f_0}\right)'
        =\sum_{j=1}^n c_jg_j.
\]
If this function is nonzero, the $n$ zeros supplied by Rolle's theorem give
a contradiction.  If it is identically zero, then $F/f_0$ is constant, so
$F$ is a constant multiple of the nowhere-vanishing function $f_0$; its
assumed zeros then force $F\equiv0$.  This proves the zero-counting assertion.

We next prove the determinant sign formula.  When $n=0$, it reads
$\sgn f_0(z_0)=\sgn W_0(x_*)$ and is immediate because the sign of $f_0$ is
constant on $I$.  Assume $n\ge1$.  The zero-counting assertion implies that
\[
        D(z_0,\ldots,z_n):=\det(f_j(z_i))_{i,j=0}^n
\]
is nonzero whenever the nodes are distinct: otherwise a nontrivial linear
combination of the columns would vanish at all $n+1$ nodes.  The chamber
\[
        \{(z_0,\ldots,z_n)\in I^{n+1}:z_0<\cdots<z_n\}
\]
is connected, so the continuous function $D$ has constant sign there.
We may assume that $I$ has nonempty interior, since otherwise $I$ is a singleton and the claim is immediate.
Choose an interior point $z\in I$ and, for sufficiently small $\delta>0$,
put $z_i=z+i\delta$.

For distinct nodes $w_0,\ldots,w_n$, let
$f[w_0,\ldots,w_i]$ denote the divided difference of order $i$.  Successive
row operations in the usual divided-difference table give the identity
\begin{equation}\label{eq:divided-difference-det}
 \det(f_j(w_i))_{i,j=0}^n
 =\left(\prod_{0\le i<j\le n}(w_j-w_i)\right)
   \det\bigl(f_j[w_0,\ldots,w_i]\bigr)_{i,j=0}^n.
\end{equation}
Indeed, at stage $k=1,\ldots,n$, and for $i=n,n-1,\ldots,k$, replace
row $i$ by its difference with row $i-1$, divided by $w_i-w_{i-k}$.  After
stage $k$, row $i$ contains the divided difference on
$w_{i-k},\ldots,w_i$.  At the end, row $i$ contains
$f_j[w_0,\ldots,w_i]$.  The product of all divisors is
\[
 \prod_{k=1}^n\prod_{i=k}^n(w_i-w_{i-k})
 =\prod_{0\le i<j\le n}(w_j-w_i),
\]
which proves \eqref{eq:divided-difference-det}.

Apply this identity with $w_i=z+i\delta$.  Repeated Rolle's theorem for
divided differences gives, for each $i$ and $j$, a point
$\xi_{i,j,\delta}\in[z,z+i\delta]$ such that
\[
 f_j[z,z+\delta,\ldots,z+i\delta]
 =\frac{f_j^{(i)}(\xi_{i,j,\delta})}{i!}.
\]
As $\delta\downarrow0$, the point $\xi_{i,j,\delta}$ tends to $z$; continuity
of $f_j^{(i)}$ therefore gives
\[
        f_j[z,z+\delta,\ldots,z+i\delta]
        \longrightarrow \frac{f_j^{(i)}(z)}{i!}.
\]
Consequently,
\[
 \frac{D(z,z+\delta,\ldots,z+n\delta)}
      {\delta^{n(n+1)/2}\prod_{0\le i<j\le n}(j-i)}
 \longrightarrow
 \frac{W_n(z)}{\prod_{k=0}^n k!}.
\]
The denominator is positive for $\delta>0$, so $D$ has the sign of $W_n(z)$
for all sufficiently small $\delta$.  Since its sign is constant on the
ordered chamber, $D$ has the sign of $W_n$ throughout that chamber.  For arbitrarily
ordered distinct nodes, permuting the rows into increasing order multiplies
the determinant by the sign of the same permutation, which is
\[
        \sgn\prod_{0\le i<j\le n}(z_j-z_i).
\]
This proves \eqref{eq:det-sign-rule}.
\end{proof}

\begin{proof}[Proof of Theorem~\ref{thm:endpoint-active}]
Testing \eqref{eq:block-def-intro} at $t=1$ gives
$p_{r,s}\le a_{r,s}$.  We prove first that $p=a_{r,s}$ is admissible when
$a_{r,s}\ge b_{r,s}$.

By symmetry, assume $1\le r\le s$.  Put
\[
        \alpha=r+1,\qquad
        \beta=s+1,\qquad
        \gamma=r+s+1=\alpha+\beta-1,
\]
and write
\[
        a:=a_{r,s}=\frac{\log\gamma}{\log\alpha+\log\beta}.
\]
Because
\[
 \alpha\beta-\gamma=rs>0
 \quad\text{and}\quad
 \gamma^2-\alpha\beta=r^2+rs+s^2+r+s>0,
\]
one has
\begin{equation}\label{eq:a-range}
        \frac12<a<1.
\end{equation}
For real $c>0$ and $0<t<1$, set
\[
        [c]_t:=\frac{1-t^c}{1-t}.
\]
Then
\[
 G_r(t)=[\alpha]_t,
 \qquad G_s(t)=[\beta]_t,
 \qquad G_{r+s}(t)=[\gamma]_t,
\]
and
\[
        [\gamma]_{t^a}=\frac{[a\gamma]_t}{[a]_t}.
\]
Thus the desired inequality at exponent $a$ is equivalent to
\begin{equation}\label{eq:q-number-target}
        [a\gamma]_t\ge [a]_t[\alpha]_t^a[\beta]_t^a.
\end{equation}

Write $t=e^{-\lambda}$ with $\lambda>0$, and define
\[
        g(c):=\log[c]_t,
        \qquad
        h(c):=g'(c)=\frac{\lambda}{e^{\lambda c}-1}.
\]
By the fundamental theorem of calculus, each difference
$g(v)-g(u)$ is $\int_u^v h(c)\,dc$.  Hence the logarithm of the quotient in
\eqref{eq:q-number-target} is
\begin{equation}\label{eq:integral-rho}
 g(a\gamma)-g(a)-ag(\alpha)-ag(\beta)
 =\int_0^\infty h(c)\rho(c)\,dc,
\end{equation}
where
\begin{equation}\label{eq:rho-def}
 \rho(c):=
 \one_{[a,a\gamma]}(c)
 -a\one_{[1,\alpha]}(c)
 -a\one_{[1,\beta]}(c).
\end{equation}
The relevant moments are
\begin{equation}\label{eq:rho-moments-zero}
        \int_0^\infty\rho(c)\,dc=0,
        \qquad
        \int_0^\infty\frac{\rho(c)}{c}\,dc=0,
\end{equation}
and
\begin{align}
 \int_0^\infty c\rho(c)\,dc
 &=\frac{(a\gamma)^2-a^2}{2}
   -a\frac{\alpha^2-1}{2}
   -a\frac{\beta^2-1}{2} \notag\\
 &=\frac{a}{2}
 \left(a(\gamma^2-1)-(\alpha^2+\beta^2-2)\right).
 \label{eq:rho-first-moment}
\end{align}
Indeed,
\[
 \int_0^\infty\rho(c)\,dc
 =a(\gamma-1)-a(\alpha-1)-a(\beta-1)=0
\]
because $\gamma=\alpha+\beta-1$, while
\[
 \int_0^\infty\frac{\rho(c)}c\,dc
 =\log\gamma-a\log\alpha-a\log\beta=0
\]
by the definition of $a$.  Moreover,
\[
 \gamma^2-1=(r+s)(r+s+2),
 \qquad
 \alpha^2+\beta^2-2=r(r+2)+s(s+2),
\]
so the assumption $a\ge b_{r,s}$ is exactly
\begin{equation}\label{eq:positive-c-moment}
        \int_0^\infty c\rho(c)\,dc\ge0.
\end{equation}

We next establish the order of the interpolation nodes.  A direct
simplification gives
\begin{equation}\label{eq:bgamma-alpha}
 b_{r,s}\gamma-\alpha
 =\frac{s(s^2+2s-r^2)}{(r+s)(r+s+2)}>0,
\end{equation}
because $s\ge r\ge1$.  Hence
\begin{equation}\label{eq:node-order}
        0<a<1<\alpha<a\gamma.
\end{equation}
Set
\[
        x_1=1,\qquad x_2=\alpha,\qquad x_3=a\gamma.
\]
Let
\[
        P(c)=A+\frac{B}{c}+Cc
\]
be the unique interpolant satisfying $P(x_i)=h(x_i)$ for $i=1,2,3$, and put
$E=h-P$.

We claim that $C>0$ and
\begin{equation}\label{eq:error-sign-pattern}
 \begin{array}{c|cccc}
 c&(0,x_1)&(x_1,x_2)&(x_2,x_3)&(x_3,\infty)\\ \hline
 \sgn E(c)&+&-&+&-
 \end{array}.
\end{equation}
To prove this, set $z=\lambda c$.  Differentiating
$h(c)=\lambda/(e^z-1)$ gives
\begin{align*}
 h'(c)&=-\frac{\lambda^2e^z}{(e^z-1)^2},\\
 h''(c)&=\frac{\lambda^3e^z(e^z+1)}{(e^z-1)^3},\\
 h'''(c)&=-\frac{\lambda^4e^z(e^{2z}+4e^z+1)}{(e^z-1)^4}.
\end{align*}
Substitution into the two determinants gives
\begin{align}
 W\left(1,\frac1c,h\right)
 &=-\frac{\lambda^2e^z}{c^3(e^z-1)^3}
 \left(z(e^z+1)-2(e^z-1)\right)<0,
 \label{eq:wronskian-three}\\
 W\left(1,\frac1c,c,h\right)
 &=\frac{4\lambda^3e^{2z}}{c^4(e^z-1)^4}
 \left(z(\cosh z+2)-3\sinh z\right)>0.
 \label{eq:wronskian-four}
\end{align}
For the first sign, observe that
\[
        z(e^z+1)>2(e^z-1)
        \quad\Longleftrightarrow\quad
        \frac z2>\tanh\frac z2.
\]
The function $u-\tanh u$ vanishes at $u=0$ and has derivative
$1-\operatorname{sech}^2u=\tanh^2u>0$ for $u>0$, so the inequality is
strict.  For the second, if
\[
        \Phi(z):=z(\cosh z+2)-3\sinh z,
\]
then
\[
        \Phi(0)=\Phi'(0)=\Phi''(0)=0,
        \qquad
        \Phi'''(z)=z\,\sinh z>0
\]
for $z>0$.  Hence $\Phi''$ is strictly increasing from $0$, so
$\Phi''(z)>0$; integrating twice more gives $\Phi'(z)>0$ and then
$\Phi(z)>0$ for every $z>0$.  The remaining initial Wronskians are
\[
 W(1)=1,
 \qquad
 W\left(1,\frac1c\right)=-\frac1{c^2},
 \qquad
 W\left(1,\frac1c,c\right)=-\frac2{c^3}.
\]
Thus Lemma~\ref{lem:det-sign} applies to both systems
$(1,c^{-1},h)$ and $(1,c^{-1},c,h)$.

By Cramer's rule,
\[
 C=
 \frac{
 \det\bigl(1,x_i^{-1},h(x_i)\bigr)_{i=1}^3}
 {
 \det\bigl(1,x_i^{-1},x_i\bigr)_{i=1}^3}.
\]
The numerator is negative by \eqref{eq:wronskian-three} and
Lemma~\ref{lem:det-sign}, while
\begin{equation}\label{eq:denominator-interpolation}
 \det\bigl(1,x_i^{-1},x_i\bigr)_{i=1}^3
 =-
 \frac{(x_2-x_1)(x_3-x_1)(x_3-x_2)}{x_1x_2x_3}<0.
\end{equation}
Hence $C>0$.

For $c\notin\{x_1,x_2,x_3\}$, define
\[
 \Delta(c):=
 \det\begin{pmatrix}
 1&x_1^{-1}&x_1&h(x_1)\\
 1&x_2^{-1}&x_2&h(x_2)\\
 1&x_3^{-1}&x_3&h(x_3)\\
 1&c^{-1}&c&h(c)
 \end{pmatrix}.
\]
Subtracting the interpolant from the final column yields
\[
        \Delta(c)=E(c)
        \det\bigl(1,x_i^{-1},x_i\bigr)_{i=1}^3.
\]
By \eqref{eq:wronskian-four} and Lemma~\ref{lem:det-sign},
\[
        \sgn\Delta(c)
        =\sgn\bigl((c-x_1)(c-x_2)(c-x_3)\bigr).
\]
Since the determinant in \eqref{eq:denominator-interpolation} is negative,
this proves \eqref{eq:error-sign-pattern}.

We now compare $E$ with $\rho$.  From \eqref{eq:a-range},
\eqref{eq:node-order}, and $\alpha\le\beta$, one has
\[
 \rho>0\text{ on }(a,1),\qquad
 \rho=1-2a<0\text{ on }(1,\alpha),
\]
and $\rho>0$ on $(\alpha,a\gamma)$.  If $a\gamma<\beta$, then additionally
$\rho=-a<0$ on $(a\gamma,\beta)$; outside these intervals $\rho=0$.
Therefore \eqref{eq:error-sign-pattern} gives
\begin{equation}\label{eq:E-rho-positive}
        E(c)\rho(c)\ge0
        \qquad(c>0).
\end{equation}
Using \eqref{eq:rho-moments-zero}, \eqref{eq:positive-c-moment}, and $C>0$,
we obtain
\begin{align*}
 \int_0^\infty h\rho
 &=\int_0^\infty P\rho+\int_0^\infty E\rho\\
 &=C\int_0^\infty c\rho(c)\,dc+\int_0^\infty E(c)\rho(c)\,dc\\
 &\ge0.
\end{align*}
By \eqref{eq:integral-rho}, this proves \eqref{eq:q-number-target} for
$0<t<1$.  The cases $t=0,1$ follow by continuity.  Thus $a$ is admissible,
and $p_{r,s}=a_{r,s}$.

It remains to prove necessity.  Suppose $a:=a_{r,s}<b_{r,s}$, and define
\[
 F(u):=
 \log G_{r+s}(e^{-au})
 -a\log G_r(e^{-u})
 -a\log G_s(e^{-u}).
\]
Using
\begin{equation}\label{eq:logG-expansion}
 \log G_N(e^{-x})
 =\log(N+1)-\frac N2x+\frac{N(N+2)}{24}x^2+O(x^4)
 \qquad(x\to0),
\end{equation}
we find
\begin{equation}\label{eq:F-expansion}
 F(u)=\frac{a}{24}
 \left(a(r+s)(r+s+2)-r(r+2)-s(s+2)\right)u^2
 +O(u^4).
\end{equation}
The constant term vanishes by the definition of $a$, and the linear terms
cancel identically.  Since $a<b_{r,s}$, the quadratic coefficient is
negative.  Hence $F(u)<0$ for all sufficiently small $u>0$, so exponent $a$
fails for $t=e^{-u}$ close to $1$.  Lemma~\ref{lem:basic-p} shows that the
maximal admissible exponent is attained; therefore $p_{r,s}<a_{r,s}$.

For completeness,
\[
 G_N(e^{-x})
 =e^{-Nx/2}(N+1)
   \frac{\sinh((N+1)x/2)}{(N+1)x/2}
   \left(\frac{\sinh(x/2)}{x/2}\right)^{-1}.
\]
Since
\[
        \log\frac{\sinh z}{z}=\frac{z^2}{6}+O(z^4)
        \qquad(z\to0),
\]
taking logarithms gives
\begin{align*}
 \log G_N(e^{-x})
 &=\log(N+1)-\frac N2x
   +\frac{(N+1)^2-1}{24}x^2+O(x^4)\\
 &=\log(N+1)-\frac N2x
   +\frac{N(N+2)}{24}x^2+O(x^4),
\end{align*}
which is \eqref{eq:logG-expansion}.
\end{proof}

\begin{remark}
The quantity $a_{r,s}$ is the zeroth-order obstruction at $t=1$, while
$b_{r,s}$ is the second-order obstruction.  Theorem~\ref{thm:endpoint-active}
says that no higher-order obstruction appears in the endpoint-active regime.
When $a_{r,s}<b_{r,s}$, the theorem does not give a closed formula for
$p_{r,s}$; it shows only that the endpoint upper bound
$p_{r,s}\le a_{r,s}$ is not sharp.
\end{remark}

\section{The fixed-head constant}\label{sec:fixed-head}

The proof uses a transfer principle that increases the length of the second
block.

\begin{lemma}[One-step transfer]\label{lem:transfer}
Fix $r,s\in\N$ and $0<p\le1$.  Suppose $p$ is admissible for $(r,s)$ and
\begin{equation}\label{eq:transfer-condition}
        p\le\frac{s+1}{r+s+1}.
\end{equation}
Then $p$ is admissible for $(r,s+1)$.
\end{lemma}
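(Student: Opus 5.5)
The plan is to pass from $(r,s)$ to $(r,s+1)$ by multiplying the known inequality by a one-step ratio inequality. Admissibility of $p$ for $(r,s)$ gives $G_{r+s}(t^p)\ge G_r(t)^pG_s(t)^p$. So it suffices to prove, for $0<t<1$,
\[
 \frac{G_{r+s+1}(t^p)}{G_{r+s}(t^p)}\ \ge\ \left(\frac{G_{s+1}(t)}{G_s(t)}\right)^{p}.
\]
Multiplying the two inequalities then yields admissibility for $(r,s+1)$. The endpoints $t=0,1$ follow by continuity, or are checked directly: at $t=0$ both sides equal $1$, and at $t=1$ the limiting form of the same argument applies.

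\textbf{Rewriting both sides.} Put $u=t^p$, $\beta=s+1$ and $\gamma=r+s+1$. Then
\[
 \frac{G_{r+s+1}(u)}{G_{r+s}(u)}=1+\frac{u^{\gamma}}{G_{r+s}(u)}=1+\frac{1-t^p}{t^{-p\gamma}-1},
 \qquad
 \frac{G_{s+1}(t)}{G_s(t)}=1+\frac{1-t}{t^{-\beta}-1}.
\]
Since $0<p\le1$, Bernoulli's inequality $(1+y)^p\le 1+py$ for $y\ge0$ bounds the right-hand side of the target by
\[
 1+p\,\frac{1-t}{t^{-\beta}-1}.
\]
Hence it is enough to show
\[
 \frac{1-t^p}{t^{-p\gamma}-1}\ \ge\ \frac{p(1-t)}{t^{-\beta}-1}.
\]

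\textbf{Comparing numerators and denominators.} For the numerators, apply Bernoulli in the other direction: $(1-x)^p\le 1-px$ for $0\le x\le1$ and $0<p\le1$. With $x=1-t$ this gives $1-t^p\ge p(1-t)$. For the denominators, the hypothesis \eqref{eq:transfer-condition} says exactly that $p\gamma\le\beta$. Since $0<t<1$, this gives $0<t^{-p\gamma}-1\le t^{-\beta}-1$. Combining a larger numerator with a smaller positive denominator gives the required inequality.

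\textbf{Main obstacle.} The only delicate point is choosing the right quantity to compare. Estimating the ratio of consecutive geometric sums, rather than working with the full products, is what makes the argument reduce to two elementary Bernoulli inequalities. Once this reduction is made, the condition $p\le (s+1)/(r+s+1)$ is used only in the denominator comparison, which explains its form. No appeal to Lemma~\ref{lem:det-sign} or to Proposition~\ref{prop:hosle-block} is needed.
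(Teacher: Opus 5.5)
Your proof is correct and is essentially the paper's argument. It uses the same reduction to the one-step ratio inequality, the same concavity bound $(1+y)^p\le 1+py$ on the right-hand side, and the same tangent-line estimate $1-t^p\ge p(1-t)$ combined with $p(r+s+1)\le s+1$; the only cosmetic difference is that your form $\frac{1-t^p}{t^{-p\gamma}-1}$ merges the paper's two separate comparisons ($t^{p\gamma}\ge t^{\beta}$ and monotonicity of $x\mapsto[x]_t$) into the single monotonicity of $x\mapsto t^{-x}$.
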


\begin{proof}
It is enough to prove
\begin{equation}\label{eq:ratio-transfer}
 \frac{G_{r+s+1}(t^p)}{G_{r+s}(t^p)}
 \ge
 \left(\frac{G_{s+1}(t)}{G_s(t)}\right)^p
 \qquad(0\le t\le1).
\end{equation}
We prove this for $0<t<1$; the endpoints follow by continuity.  Concavity of
$x\mapsto x^p$ gives
\begin{equation}\label{eq:right-ratio-bound}
 \left(\frac{G_{s+1}(t)}{G_s(t)}\right)^p
 =\left(1+\frac{t^{s+1}}{G_s(t)}\right)^p
 \le1+p\frac{t^{s+1}}{G_s(t)}.
\end{equation}
On the other hand,
\[
 \frac{G_{r+s+1}(t^p)}{G_{r+s}(t^p)}
 =1+\frac{t^{p(r+s+1)}}{G_{r+s}(t^p)}.
\]
Condition \eqref{eq:transfer-condition} implies
$t^{p(r+s+1)}\ge t^{s+1}$.  Also, with
$[x]_t=(1-t^x)/(1-t)$,
\[
        G_{r+s}(t^p)=\frac{[p(r+s+1)]_t}{[p]_t}.
\]
Since
\[
        \frac{\partial}{\partial x}[x]_t
        =-\frac{t^x\log t}{1-t}>0,
\]
the map $x\mapsto[x]_t$ is increasing.  Together with
\begin{equation}\label{eq:p-number-lower}
        [p]_t\ge p,
\end{equation}
we obtain
\[
 pG_{r+s}(t^p)
 \le[p(r+s+1)]_t
 \le[s+1]_t
 =G_s(t).
\]
Here \eqref{eq:p-number-lower} follows from the tangent-line inequality
$t^p\le1-p(1-t)$.  Therefore
\[
 \frac{t^{p(r+s+1)}}{G_{r+s}(t^p)}
 \ge p\frac{t^{s+1}}{G_s(t)},
\]
which, together with \eqref{eq:right-ratio-bound}, proves
\eqref{eq:ratio-transfer}.  Multiplying \eqref{eq:ratio-transfer} by the
admissible inequality for $(r,s)$ gives the claim for $(r,s+1)$.
\end{proof}

\begin{proof}[Proof of Theorem~\ref{thm:fixed-head}]
Fix $r\in\N$ and put
\[
        m_r:=\min_{s\ge r}a_{r,s}.
\]
The minimum exists because $a_{r,s}\to1$ as $s\to\infty$, whereas
$0<a_{r,r}<1$.  Indeed, choose $S$ so large that
\[
        a_{r,s}>\frac{1+a_{r,r}}2>a_{r,r}
        \qquad(s\ge S).
\]
Hence the minimum is attained among the finitely many integers
$r\le s<S$.  In particular,
\begin{equation}\label{eq:mr-range}
        0<m_r\le a_{r,r}<1.
\end{equation}
We first prove
\begin{equation}\label{eq:uniform-lower-mr}
        p_{r,s}\ge m_r
        \qquad(s\ge1).
\end{equation}

For $1\le s\le r$, Proposition~\ref{prop:hosle-block} with ambient parameter
$r$ gives
\[
        p_{r,s}\ge q_r=a_{r,r}\ge m_r.
\]
We now treat $s\ge r$.  Set
\[
        c_s:=\frac{s+1}{r+s+1}.
\]
Two direct calculations give
\begin{align}
 c_s-b_{r,s}
 &=\frac{r(s^2-r^2-2r)}{(r+s)(r+s+1)(r+s+2)}\ge0
 &&(s\ge r+1),
 \label{eq:cs-b}\\
 c_{s-1}-b_{r,s}
 &=\frac{r(s-r-2)}{(r+s)(r+s+2)}\ge0
 &&(s\ge r+2).
 \label{eq:cprev-b}
\end{align}

We also need that $(r,r+1)$ is endpoint-active.  We claim
\begin{equation}\label{eq:arr1-c}
        a_{r,r+1}\ge c_{r+1}.
\end{equation}
This is equivalent to
\[
 2(r+1)\log(2r+2)
 \ge(r+2)\log((r+1)(r+2)).
\]
For real $x\ge1$, define
\[
 H(x):=2(x+1)\log(2x+2)
 -(x+2)\log((x+1)(x+2)).
\]
Then
\[
 H(1)=\log\frac{4^4}{6^3}>0
\]
and
\[
 H'(x)=\log\frac{4(x+1)}{x+2}-\frac1{x+1}
 \ge\log2-\frac12>0.
\]
Thus $H(r)>0$, proving \eqref{eq:arr1-c}.  Combining it with
\eqref{eq:cs-b} at $s=r+1$ gives
$a_{r,r+1}\ge b_{r,r+1}$, and hence Theorem~\ref{thm:endpoint-active} yields
\begin{equation}\label{eq:rr1-active}
        p_{r,r+1}=a_{r,r+1}.
\end{equation}

Choose
\[
        s_0:=\min\{s\ge r+1:c_s\ge m_r\}.
\]
This exists because $c_s\nearrow1$ and $m_r<1$.  We claim first that $m_r$
is admissible for every $r\le s\le s_0$.  For $s=r$, this follows from
$p_{r,r}=a_{r,r}\ge m_r$, and for $s=r+1$ from
\eqref{eq:rr1-active}.  If $r+2\le s\le s_0$, then the minimality of $s_0$
gives $c_{s-1}<m_r$.  Since $a_{r,s}\ge m_r$, equation
\eqref{eq:cprev-b} gives
\[
        a_{r,s}\ge m_r>c_{s-1}\ge b_{r,s}.
\]
Theorem~\ref{thm:endpoint-active} therefore implies
$p_{r,s}=a_{r,s}\ge m_r$.

Finally, $c_s$ is increasing and $c_{s_0}\ge m_r$.  The range
\eqref{eq:mr-range} verifies the hypothesis $0<p\le1$ in
Lemma~\ref{lem:transfer}.  Starting at $s_0$ and applying that lemma
repeatedly with $p=m_r$ shows that $m_r$ is
admissible for every $s\ge s_0$.  This proves \eqref{eq:uniform-lower-mr}.

Let $s_*$ attain the minimum defining $m_r$.  The endpoint bound gives
$p_{r,s_*}\le a_{r,s_*}=m_r$, while \eqref{eq:uniform-lower-mr} gives the
reverse inequality.  Hence
\[
        C_r=\inf_{s\ge1}p_{r,s}=m_r,
\]
which proves \eqref{eq:fixed-head-identity-intro}.

It remains to locate the integer minimizer.  For real $\sigma\ge r$, define
\[
 A_r(\sigma):=
 \frac{\log(r+\sigma+1)}{\log(r+1)+\log(\sigma+1)}.
\]
A direct differentiation gives
\begin{equation}\label{eq:Ar-derivative}
 A_r'(\sigma)=
 \frac{\Phi_r(\sigma)}
 {(\sigma+1)(r+\sigma+1)
  \bigl(\log((r+1)(\sigma+1))\bigr)^2},
\end{equation}
where
\[
 \Phi_r(\sigma):=
 (\sigma+1)\log((r+1)(\sigma+1))
 -(r+\sigma+1)\log(r+\sigma+1).
\]
Moreover,
\begin{equation}\label{eq:Phi-derivative}
 \Phi_r'(\sigma)=
 \log\frac{(r+1)(\sigma+1)}{r+\sigma+1}>0.
\end{equation}
Indeed,
\[
 \frac{(r+1)(\sigma+1)}{r+\sigma+1}>1
 \quad\Longleftrightarrow\quad r\sigma>0,
\]
which holds for $r\ge1$ and $\sigma\ge r$.
We have $\Phi_r(r)<0$.  Indeed,
\begin{align*}
 &(2r+1)\log(2r+1)-2(r+1)\log(r+1)\\
 &\qquad=(2r+1)\log\frac{2r+1}{r+1}-\log(r+1)\\
 &\qquad\ge(2r+1)\log\frac32-\log(r+1)>0,
\end{align*}
where the last expression is positive at $r=1$ and its derivative is
\[
        2\log\frac32-\frac1{r+1}
        \ge2\log\frac32-\frac12>0
        \qquad(r\ge1).
\]
On the other hand, using $\log(1+x)\le x$, we have
\begin{align*}
 \Phi_r(\sigma)
 &=(\sigma+1)\log(r+1)
   +(\sigma+1)\log\frac{\sigma+1}{\sigma+r+1}
   -r\log(\sigma+r+1)\\
 &=(\sigma+1)\log(r+1)
   -(\sigma+1)\log\left(1+\frac r{\sigma+1}\right)
   -r\log(\sigma+r+1)\\
 &\ge(\sigma+1)\log(r+1)
   -r-r\log(\sigma+r+1)
 \longrightarrow+\infty.
\end{align*}
Thus \eqref{eq:Phi-derivative} gives a unique zero
$\sigma_r\in(r,\infty)$, characterized by \eqref{eq:sigma-intro}.
Equation \eqref{eq:Ar-derivative} shows that $A_r$ decreases up to
$\sigma_r$ and increases afterwards.  Hence its minimum over integer
$s\ge r$ is attained at $k_r=\lfloor\sigma_r\rfloor$ or $k_r+1$, proving
\eqref{eq:fixed-head-two-intro}.

Finally, at $\sigma=\sigma_r$, equation \eqref{eq:sigma-intro} gives
\[
 \log(r+\sigma_r+1)
 =\frac{\sigma_r+1}{r+\sigma_r+1}
  \log((r+1)(\sigma_r+1)).
\]
Dividing by the final logarithm proves \eqref{eq:real-min-intro}.
\end{proof}

\begin{proof}[Proof of Corollary~\ref{cor:C1}]
Apply Theorem~\ref{thm:fixed-head} with $r=1$.  In the notation of its
proof,
\[
 \Phi_1(2)=3\log6-4\log4<0,
 \qquad
 \Phi_1(3)=4\log8-5\log5>0.
\]
Since $\Phi_1$ is strictly increasing, $2<\sigma_1<3$, and hence
\[
 C_1=\min\{a_{1,2},a_{1,3}\}.
\]
A direct comparison gives
\[
 a_{1,2}=0.773705\ldots<0.773976\ldots=a_{1,3}.
\]
Therefore
\[
        C_1=a_{1,2}=\frac{\log4}{\log6}=p_0.
\]
The endpoint bound and the definition of $C_1$ now give
\[
        C_1\le p_{1,2}\le a_{1,2}=C_1,
\]
so $p_{1,2}=p_0$ as well.

For $L\ge1$, one has $p_{1,L}\ge C_1=p_0$, so
Lemma~\ref{lem:basic-p} shows that $p_0$ is admissible for $(1,L)$.  This is
exactly \eqref{eq:geometric-tail-intro}.  For $L=0$, the same inequality is
\[
        1+t^{p_0}\ge(1+t)^{p_0},
\]
which follows from subadditivity of $u\mapsto u^{p_0}$, since $0<p_0<1$.
\end{proof}

\section{Limiting value of $p_{1, s}$}

In the previous section, we determined $\inf_{s\ge 1}p_{1, s}$. We now show that $\lim_{s\to \infty}p_{1, s}$ exists and equals $\frac{\log 2}{\log(1+\sqrt{2})}.$

\begin{proof}[Proof of Proposition~\ref{prop:p1s-limit}]
For fixed $0\le t<1$, as $s\to\infty$ we have
\begin{equation*}
    G_{s+1}(t^p)\to \frac{1}{1-t^p}
\end{equation*}
and
\begin{equation*}
    (1+t)G_s(t)\to \frac{1+t}{1-t}.
\end{equation*}
Thus the limiting inequality is
\begin{equation}\label{eq:limiting-tail-ineq}
    \frac{1}{1-t^p}
    \ge
    \left(\frac{1+t}{1-t}\right)^p,
    \qquad 0\le t<1.
\end{equation}
Equivalently,
\begin{equation}\label{eq:limiting-tail-f}
    t^p+\left(\frac{1-t}{1+t}\right)^p\ge1.
\end{equation}

Let
\begin{equation*}
    \varphi(t):=\frac{1-t}{1+t}.
\end{equation*}
The map $\varphi$ is a decreasing involution of $[0,1]$, and its unique
fixed point is
\begin{equation*}
    t_0=\sqrt2-1.
\end{equation*}
Indeed, $\varphi(t)=t$ is equivalent to
\begin{equation*}
    t^2+2t-1=0.
\end{equation*}

Set
\begin{equation*}
    p_\infty:=\frac{\log2}{\log(1+\sqrt2)}.
\end{equation*}
Since
\begin{equation*}
    t_0=\sqrt2-1=(1+\sqrt2)^{-1},
\end{equation*}
we have
\begin{equation*}
    t_0^{p_\infty}=\frac12.
\end{equation*}
Testing \eqref{eq:limiting-tail-f} at $t=t_0$ gives
\begin{equation*}
    2t_0^p\ge1.
\end{equation*}
Hence every admissible exponent for the limiting inequality satisfies
\begin{equation*}
    p\le p_\infty.
\end{equation*}

We now prove that $p_\infty$ is admissible for the limiting inequality.
Define
\begin{equation*}
    f(t):=t^{p_\infty}+\varphi(t)^{p_\infty}.
\end{equation*}
We claim that
\begin{equation}\label{eq:f-lower-one}
    f(t)\ge1
    \qquad(0\le t\le1).
\end{equation}
Since $\varphi$ is an involution, we have
\begin{equation*}
    f(\varphi(t))=f(t).
\end{equation*}
Also
\begin{equation*}
    f(0)=1,\qquad f(1)=1,\qquad f(t_0)=2t_0^{p_\infty}=1.
\end{equation*}

For $0<t<1$, we have
\begin{equation*}
    \frac{1}{p_\infty}f'(t)
    =
    t^{p_\infty-1}
    -
    \frac{2\varphi(t)^{p_\infty-1}}{(1+t)^2}.
\end{equation*}
Thus the sign of $f'(t)$ is the sign of
\begin{equation*}
    \Psi(t):=
    (1-p_{\infty})\log\frac{\varphi(t)}{t}
    +
    2\log(1+t)
    -
    \log2.
\end{equation*}
Indeed, $f'(t)>0$ is equivalent to
\begin{equation*}
    \left(\frac{\varphi(t)}{t}\right)^{1-p_{\infty}}(1+t)^2>2,
\end{equation*}
which is exactly $\Psi(t)>0$.

A direct calculation gives
\begin{equation}\label{eq:Psi-derivative}
    \Psi'(t)
    =
    \frac{-(1-p_\infty)+2p_\infty t-(1+p_\infty)t^2}
    {t(1-t)(1+t)}.
\end{equation}
The numerator in \eqref{eq:Psi-derivative} is a quadratic polynomial, so
$\Psi'$ has at most two zeros. Hence, by Rolle's theorem, $\Psi$ has at
most three zeros in $(0,1)$.

We have
\begin{equation*}
    \Psi(0^+)=+\infty,
    \qquad
    \Psi(1^-)=-\infty,
\end{equation*}
and
\begin{equation*}
    \Psi(t_0)=0,
\end{equation*}
because $\varphi(t_0)=t_0$ and $1+t_0=\sqrt2$. Moreover,
\begin{equation*}
    \Psi'(t_0)>0.
\end{equation*}
To see this, using $t_0^2+2t_0-1=0$, the numerator in
\eqref{eq:Psi-derivative} at $t=t_0$ equals
\begin{equation*}
    2\bigl(t_0(1+2p_\infty)-1\bigr).
\end{equation*}
Thus $\Psi'(t_0)>0$ is equivalent to
\begin{equation*}
    p_\infty>\frac1{\sqrt2},
\end{equation*} which is true.

Therefore $\Psi$ crosses from negative to positive at $t=t_0$. Since
$\Psi(0^+)=+\infty$, $\Psi(1^-)=-\infty$, and $\Psi$ has at most three
zeros, its sign pattern is
\begin{equation*}
    +,\ -,\ +,\ -
\end{equation*}
on the four intervals determined by its three zeros, with $t_0$ as the
middle zero. Consequently $f'$ has the same sign pattern. Thus $f$
increases from $f(0)=1$ to a local maximum, decreases to
$f(t_0)=1$, increases to a local maximum, and then decreases to
$f(1)=1$. Hence \eqref{eq:f-lower-one} holds. This proves that
$p_\infty$ is the optimal exponent for the limiting inequality
\eqref{eq:limiting-tail-ineq}.

It remains to pass from the finite inequalities to the limiting one.
First let $p>p_\infty$. At $t=t_0$ we have
\begin{equation*}
    2t_0^p<1,
\end{equation*}
so the limiting inequality fails at $t_0$. Since
\begin{equation*}
    G_{s+1}(t_0^p)\to \frac{1}{1-t_0^p}
\end{equation*}
and
\begin{equation*}
    (1+t_0)G_s(t_0)\to \frac{1+t_0}{1-t_0},
\end{equation*}
the finite inequality \eqref{eq:p1s-finite-ineq} also fails at $t=t_0$
for all sufficiently large $s$. Therefore
\begin{equation*}
    \limsup_{s\to\infty}p_{1,s}\le p_\infty.
\end{equation*}

Conversely, fix $0<p<p_\infty$. Then the limiting inequality is strict
for every $0<t<1$:
\begin{equation*}
    t^p+\varphi(t)^p>1.
\end{equation*}
Equivalently,
\begin{equation*}
    \frac{1}{1-t^p}>
    \left(\frac{1+t}{1-t}\right)^p
    \qquad(0<t<1).
\end{equation*}

We prove that $p$ is admissible for \eqref{eq:p1s-finite-ineq} for all
large $s$. Choose $\delta\in(0,1/2)$ small enough so that
\begin{equation}\label{eq:near-zero-choice}
    1+t^p
    \ge
    \left(\frac{1+t}{1-t}\right)^p
    \qquad(0\le t\le\delta),
\end{equation}
and also so that
\begin{equation}\label{eq:near-one-delta-choice}
    \left(\frac1{2\delta}\right)^{1-p}\ge 2^p.
\end{equation}
Such a choice of $\delta$ is possible: as $t\downarrow0$,
\[
    \left(\frac{1+t}{1-t}\right)^p=1+2pt+O(t^2),
\]
whereas $t^p/t=t^{p-1}\to\infty$, so the term $t^p$ dominates the linear error term and hence
\[
    1+t^p\ge \left(\frac{1+t}{1-t}\right)^p
\]
for all sufficiently small $t$. Decreasing $\delta$ further if necessary, we may also ensure
\[
    \left(\frac1{2\delta}\right)^{1-p}\ge 2^p,
\]
since $1-p>0$.

For $0\le t\le\delta$, we have
\begin{equation*}
    G_{s+1}(t^p)\ge 1+t^p
\end{equation*}
and
\begin{equation*}
    (1+t)G_s(t)\le \frac{1+t}{1-t}.
\end{equation*}
Thus \eqref{eq:near-zero-choice} implies the finite inequality on
$[0,\delta]$, uniformly in $s$.

On the compact interval $[\delta,1-\delta]$, the convergence
\begin{equation*}
    G_{s+1}(t^p)\to \frac{1}{1-t^p}
\end{equation*}
and
\begin{equation*}
    (1+t)G_s(t)\to \frac{1+t}{1-t}
\end{equation*}
is uniform. Since the limiting inequality is strict on this compact
interval, the finite inequality holds on $[\delta,1-\delta]$ for all
sufficiently large $s$.

It remains to handle $1-\delta\le t\le1$. Since $p<1$, we have
$t^p\ge t$ for $0\le t\le1$, and hence
\begin{equation*}
    G_{s+1}(t^p)\ge G_s(t).
\end{equation*}
Also
\begin{equation*}
    \big((1+t)G_s(t)\big)^p\le \big(2G_s(t)\big)^p.
\end{equation*}
Therefore it is enough to show
\begin{equation*}
    G_s(t)\ge \big(2G_s(t)\big)^p,
\end{equation*}
or equivalently
\begin{equation*}
    G_s(t)^{1-p}\ge 2^p.
\end{equation*}
For $t\ge1-\delta$,
\begin{equation*}
    G_s(t)\ge G_s(1-\delta).
\end{equation*}
Since
\begin{equation*}
    G_s(1-\delta)\to \frac1{\delta}
\end{equation*}
as $s\to\infty$, for all sufficiently large $s$ we have
\begin{equation*}
    G_s(1-\delta)\ge \frac1{2\delta}.
\end{equation*}
By \eqref{eq:near-one-delta-choice}, this gives
\begin{equation*}
    G_s(t)^{1-p}\ge 2^p
\end{equation*}
for all $1-\delta\le t\le1$ and all sufficiently large $s$. Hence the
finite inequality holds on $[1-\delta,1]$ for all sufficiently large
$s$.

Combining the three regions, $p$ is admissible for $p_{1,s}$ for all
sufficiently large $s$. Therefore
\begin{equation*}
    \liminf_{s\to\infty}p_{1,s}\ge p.
\end{equation*}
Since this holds for every $0<p<p_\infty$, we get
\begin{equation*}
    \liminf_{s\to\infty}p_{1,s}\ge p_\infty.
\end{equation*}
Together with the upper bound on the limsup, this proves
\begin{equation*}
    \lim_{s\to\infty}p_{1,s}=p_\infty
    =
    \frac{\log2}{\log(1+\sqrt2)}.
\end{equation*}
\end{proof}

\section{Consequences for max-convolution and sumsets}\label{sec:applications}

\subsection{The uniform two-slice inequality}\label{sec:two-slice}

Corollary~\ref{cor:C1} supplies the geometric-block inequality for every
support length.  We now reduce arbitrary nonincreasing two-slice data to that
case.  The max-tie idea was introduced in \cite{BIKM2025}.  More specifically,
the reduction below to adjacent ratios in $\{1,t\}$ and the comparison with a
geometric vector by majorization are adapted from the proof of
\cite[Theorem~8, Sections~5--6]{Hosle2026}.  We include the full specialized
argument.  The new ingredient is Corollary~\ref{cor:C1}, which supplies the
uniform exponent $p_0$ in place of $q_m$.

\begin{lemma}[Max-tie reduction]\label{lem:max-tie}
Fix $0<p<1$, $m\ge1$, and $0<t\le1$.  On
\[
 \Delta_m:=\left\{y_0\ge\cdots\ge y_m\ge0:
                    \sum_{j=0}^m y_j=1\right\},
\]
consider
\[
 \mathcal F_p(y):=
 y_0^p+\sum_{j=1}^m\max\{y_j,ty_{j-1}\}^p+(ty_m)^p-(1+t)^p.
\]
The minimum of $\mathcal F_p$ on $\Delta_m$ is attained at a point with the
following property: if $N=\max\{j:y_j>0\}$, then
\begin{equation}\label{eq:max-tie-ratios}
        \frac{y_j}{y_{j-1}}\in\{1,t\},
        \qquad 1\le j\le N.
\end{equation}
\end{lemma}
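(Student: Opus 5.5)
The plan is a concavity argument carried out on the pieces of $\Delta_m$ where the maxima have a fixed pattern, together with a one-parameter perturbation that pushes non-tied ratios to $1$ or $t$. Since $\mathcal F_p$ is continuous on the compact set $\Delta_m$, a minimizer exists. Among all minimizers we pick one, $y$, that is extremal for the potential
\[
 \Pi(y):=\#\Bigl\{1\le j\le N:\ \tfrac{y_j}{y_{j-1}}\in\{1,t\}\Bigr\}-(m+1)N ,
\]
where $N=N(y)$. More precisely, we choose $y$ to make $N$ as small as possible and then, subject to that, to make the tie count as large as possible. This is possible because both quantities take finitely many values. We aim to show that every index $1\le j\le N$ is tied. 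Suppose instead that some $j\le N$ has $y_{j-1}>0$, $y_j>0$ and $y_j/y_{j-1}\notin\{1,t\}$. We will produce another minimizer that either has smaller $N$ or has the same $N$ and strictly more tied indices, contradicting the choice of $y$.

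\textbf{The perturbation.} Split the indices into the head $\{0,\dots,j-1\}$ and the tail $\{j,\dots,m\}$. Put $H=\sum_{k<j}y_k>0$ and $T=\sum_{k\ge j}y_k>0$. Define the vector $u$ by
\[
 u_k=-\frac{T}{H}\,y_k \quad (k<j), \qquad u_k=y_k \quad (k\ge j),
\]
and consider the segment $y(\eps)=y+\eps u$. Then $\sum_k y_k(\eps)=1$ for all $\eps$. Moreover, for $k\ne j$ each ratio $y_k(\eps)/y_{k-1}(\eps)$ is unchanged, because the head and the tail are each multiplied by a positive scalar. The ratio at $j$ equals
\[
 \frac{y_j}{y_{j-1}}\cdot\frac{1+\eps}{1-\eps T/H},
\]
which is strictly increasing in $\eps$. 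Hence $y(\eps)$ stays in $\Delta_m$ as long as this ratio lies in $[0,1]$ and both scalars stay positive. In addition, the choice of which term realizes each maximum $\max\{y_k,ty_{k-1}\}$ is unchanged for $k\ne j$. At $k=j$ it is unchanged as long as the ratio does not cross $t$.

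\textbf{Concavity on the segment.} Let $J$ be the maximal closed interval of $\eps$ containing $0$ on which $y(\eps)\in\Delta_m$ and the ratio at $j$ stays on the same side of $t$ as at $\eps=0$. On $J$, every term of $\mathcal F_p$ has the form $(\ell(\eps))^p$ with $\ell$ affine and nonnegative. Since $0<p<1$, such a term is concave. Therefore $\mathcal F_p(y(\eps))$ is concave on $J$, so its minimum over $J$ is attained at an endpoint. Because $y=y(0)$ is a global minimizer, that endpoint is again a global minimizer. The endpoints of $J$ are of three kinds.
\begin{itemize}
 \item[(i)] The ratio at $j$ equals $1$ (the constraint $y_j\le y_{j-1}$ becomes active).
 \item[(ii)] The ratio at $j$ equals $t$ (the boundary between the two max patterns).
 \item[(iii)] The ratio at $j$ equals $0$. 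This can only occur when the starting ratio is below $t$; then the tail scalar $1+\eps$ vanishes, which forces $y_k=0$ for all $k\ge j$.
\end{itemize}
In cases (i) and (ii), $N$ is unchanged, ties at other indices are preserved, and $j$ becomes tied. In case (iii), $N$ drops to $j-1<N$. Either way we obtain a contradiction with the choice of $y$.

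The case $t=1$ needs one check. Then $\{1,t\}=\{1\}$, the pattern boundary (ii) coincides with (i), and the same argument applies.

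\textbf{Main obstacle.} The delicate point is confirming that the head scalar $1-\eps T/H$ cannot reach $0$ before one of (i)--(iii) occurs. It is also important that, along the whole of $J$, no other constraint ($y_{k}\le y_{k-1}$ or $y_k\ge0$ for $k\ne j$, or a max-pattern switch at $k\ne j$) becomes active. For the first claim, as $\eps$ increases toward $H/T$ the ratio at $j$ tends to $+\infty$, so it reaches $1$ first. For the second, the ratios at $k\ne j$ are frozen, so none of those constraints can change. With this checked, the concave-on-a-segment argument closes the proof.
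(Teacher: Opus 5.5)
Your proof is correct. It uses the same two ingredients as the paper: concavity of $\mathcal F_p$ once the max pattern is frozen, and rescaling blocks of indices separated by an untied ratio. It organizes them differently.

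The paper partitions $\Delta_m$ into finitely many closed polytopal cells, one for each choice of which term realizes each maximum. It uses concavity on each cell, via convex combinations of vertices, to put a global minimizer at a vertex. It then shows that a vertex has no untied ratio. The active equalities $y_i=y_{i-1}$ and $y_i=ty_{i-1}$ form a graph, and rescaling one component against its complement writes any point with an untied ratio as the midpoint of two nearby points of the same cell.

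You avoid vertices altogether. You pick a lexicographically extremal minimizer (first minimal $N$, then maximal number of ties). You then do a one-parameter line search along the head/tail rescaling at the untied index $j$, which is a coarsening of the paper's component rescaling. Only one-variable concavity on the maximal segment $J$ is used, together with the fact that a concave function on $J$ minimized at $0$ is also minimized at an endpoint.

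The price is that you must track where the segment exits. When $y_j/y_{j-1}>t$, $J=[\eps_t,\eps_1]$, where $\eps_t$ and $\eps_1$ are the parameters at which the ratio at $j$ equals $t$ and $1$. When $y_j/y_{j-1}<t$ (which covers $t=1$), $J=[-1,\eps_t]$. The endpoint $\eps=-1$, where the whole tail collapses to zero, is exactly why $N$ must enter your potential.

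The paper's perturbation is infinitesimal and two-sided, so it never leaves the cell, never creates new zeros, and needs no potential. What it uses instead is the polytope vertex reduction.

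Your endpoint analysis is complete. The point you flag as the main obstacle, that the head scalar cannot vanish first and that no other constraint becomes active, is settled by your own remark that all other ratios are frozen.
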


\begin{proof}
Partition $\Delta_m$ into the finitely many closed cells obtained by choosing,
for every $1\le j\le m$, one of the inequalities
\[
        y_j\ge ty_{j-1},
        \qquad\text{or}\qquad
        y_j\le ty_{j-1}.
\]
Ties may belong to both cells.  On each cell every maximum in
$\mathcal F_p$ is represented by a fixed coordinate, so $\mathcal F_p$ is a
sum of nonnegative multiples of the concave functions $y_i^p$, minus a
constant.  It is therefore concave on each cell.  To justify the extreme-point
reduction, let $P$ be one such compact polytope and write any $u\in P$ as a
convex combination $u=\sum_\nu\lambda_\nu v_\nu$ of its vertices.  Concavity
gives
\[
        \mathcal F_p(u)\ge
        \sum_\nu\lambda_\nu\mathcal F_p(v_\nu),
\]
so at least one vertex has value at most $\mathcal F_p(u)$.  Thus the minimum
on every cell, and hence the global minimum over the finitely many cells, is
attained at an extreme point.  Choose such a global minimizer $y$.

Let $N=\max\{j:y_j>0\}$.  Suppose that for some $1\le j\le N$ the ratio
$y_j/y_{j-1}$ belongs to neither $\{1,t\}$.  Then neither adjacent equality
$y_j=y_{j-1}$ nor $y_j=ty_{j-1}$ is active.  Form a graph on
$\{0,\ldots,N\}$ whose edges are the active equalities
$y_i=y_{i-1}$ and $y_i=ty_{i-1}$.  The absent edge between $j-1$ and $j$
splits the graph into at least two components.

Choose one component $S$.  Every index in $\{0,\ldots,N\}$ has
positive coordinate, so both $S$ and its complement have positive total
mass.  Set
\[
 c:=\frac{\sum_{i\in S}y_i}
          {\sum_{i\in\{0,\ldots,N\}\setminus S}y_i}>0
\]
and define
\[
 v_i=
 \begin{cases}
 y_i,&i\in S,\\
 -c y_i,&i\in\{0,\ldots,N\}\setminus S,\\
 0,&i>N.
 \end{cases}
\]
Then $\sum_i v_i=0$.  For $0<\eps<\min\{1,1/c\}$, the perturbations
$y\pm\eps v$ are obtained by multiplying the coordinates in $S$ by
$1\pm\eps$ and those in the complement by $1\mp c\eps$; all these factors
are positive.  Every active equality joins two indices in the same graph
component and is therefore preserved.  Every order or cell inequality that
joins distinct components is strict, for otherwise the corresponding edge
would be active.  Since there are only finitely many such strict inequalities,
continuity shows that all of them remain valid when $\eps>0$ is sufficiently
small.  The coordinates after $N$ remain zero, and the equation
$\sum_i y_i=1$ is preserved because $\sum_i v_i=0$.  Hence both
$y+\eps v$ and $y-\eps v$ are distinct points of the same cell whose average
is $y$, contradicting extremality.
This proves \eqref{eq:max-tie-ratios}.
\end{proof}

\begin{lemma}[Geometric majorization]\label{lem:geometric-majorization}
Let $0<t\le1$ and let
$z_0\ge z_1\ge\cdots\ge z_N>0$ satisfy
\[
        z_j\ge t z_{j-1},\qquad 1\le j\le N.
\]
Put $B=\sum_{j=0}^N z_j$ and
\[
        g_j:=\frac{B t^j}{G_N(t)},\qquad 0\le j\le N.
\]
Then $g=(g_0,\ldots,g_N)$ majorizes $z=(z_0,\ldots,z_N)$; that is,
\[
        \sum_{j=0}^k g_j\ge \sum_{j=0}^k z_j
        \quad(0\le k<N),
        \qquad
        \sum_{j=0}^N g_j=\sum_{j=0}^N z_j.
\]
Moreover,
\begin{equation}\label{eq:last-coordinate-majorization}
        z_N\ge\frac{B t^N}{G_N(t)}.
\end{equation}
\end{lemma}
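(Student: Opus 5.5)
The plan is to compare $z$ with $g$ through the ratios $z_j/g_j$, showing that this sequence is nondecreasing in $j$. Both majorization and the last-coordinate bound follow from that monotonicity. The hypothesis $z_j\ge tz_{j-1}$ gives
\[
        \frac{z_j}{t^j}\ge\frac{z_{j-1}}{t^{j-1}}
        \qquad(1\le j\le N),
\]
so $w_j:=z_j/t^j$ is nondecreasing in $j$. Since $g_j$ is a constant multiple of $t^j$, the ratio $z_j/g_j$ is a positive multiple of $w_j$ and is therefore nondecreasing as well. The equality of the total sums is immediate from the definition of $B$:
\[
        \sum_{j=0}^N g_j=\frac{B\,G_N(t)}{G_N(t)}=B.
\]

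\emph{Partial sums.} I will use the standard weighted-averages argument. Put $\mu_j:=t^j/G_N(t)$, a probability vector on $\{0,\ldots,N\}$, so that $g_j=B\mu_j$ and $z_j=\mu_j\,(G_N(t)w_j)$. Writing $v_j:=G_N(t)w_j$, which is nondecreasing, we have
\[
        B=\sum_{j=0}^N\mu_j v_j.
\]
For $0\le k<N$, the claim $\sum_{j\le k}z_j\le\sum_{j\le k}g_j$ becomes
\[
        \sum_{j\le k}\mu_j v_j\le\Bigl(\sum_{j\le k}\mu_j\Bigr)\sum_{j=0}^N\mu_j v_j.
\]
This says that the $\mu$-average of $v$ over the initial segment $\{0,\ldots,k\}$ is at most the full $\mu$-average. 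That holds because $v$ is nondecreasing, so every value on the initial segment is at most every value on the complementary tail. Concretely, if $M=\sum_{j\le k}\mu_j$, then the full average equals $M\cdot(\text{head average})+(1-M)\cdot(\text{tail average})$, and the tail average is at least the head average.

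\emph{Last coordinate.} The same monotonicity gives the final bound. Since $v_N$ is the largest value of $v$, it is at least the $\mu$-average $B$, so
\[
        z_N=\mu_N v_N\ge\mu_N B=\frac{Bt^N}{G_N(t)}.
\]
This is \eqref{eq:last-coordinate-majorization}. Equivalently, it is the $k=N-1$ partial-sum inequality subtracted from the total.

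No step is a serious obstacle. The only point needing care is expressing the partial-sum inequality as a comparison between a head average and a full average. The hypotheses $z_N>0$ and $t>0$ guarantee that every $w_j$ is well defined and that all weights $\mu_j$ are positive. The assumption $z_0\ge\cdots\ge z_N$ is not needed for these conclusions.
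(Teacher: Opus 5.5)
Your proof is correct and is essentially the paper's argument. The paper also uses that $z_j/t^j$ is nondecreasing: it bounds the head $S_k$ above by $z_kt^{-k}G_k(t)$ and the tail $B-S_k$ below by $z_kt^{-k}(G_N(t)-G_k(t))$, which is your head-average versus tail-average comparison with the pivot value $z_k/t^k$. It obtains \eqref{eq:last-coordinate-majorization} by summing $z_j\le t^{-(N-j)}z_N$, which is your observation that $v_N$ dominates the average.

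One small caveat concerns your remark that $z_0\ge\cdots\ge z_N$ is unnecessary. That is true for the displayed inequalities. However, this ordering, together with $t\le1$ making $g$ nonincreasing, is what makes these index-ordered partial sums the ones in the definition of majorization used for Karamata's inequality.
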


\begin{proof}
The assertion is immediate when $N=0$.  Assume $N\ge1$ and, for
$0\le k<N$, set $S_k=\sum_{j=0}^k z_j$.  The ratio condition gives
\[
 z_j\le t^{-(k-j)}z_k\quad(j\le k),
 \qquad
 z_j\ge t^{j-k}z_k\quad(j>k).
\]
Consequently,
\[
 S_k\le z_k t^{-k}G_k(t),
 \qquad
 B-S_k\ge z_k t^{-k}\bigl(G_N(t)-G_k(t)\bigr).
\]
Hence
\[
 \frac{S_k}{B-S_k}
 \le\frac{G_k(t)}{G_N(t)-G_k(t)},
\]
and therefore
\[
        S_k\le B\frac{G_k(t)}{G_N(t)}
        =\sum_{j=0}^k g_j.
\]
Since $g$ and $z$ have the same total sum and are both nonincreasing, this is
exactly the statement that $g$ majorizes $z$.  Finally, for every $j\le N$,
\[
        z_j\le t^{-(N-j)}z_N.
\]
Summing over $j$ gives $B\le z_N t^{-N}G_N(t)$, which is
\eqref{eq:last-coordinate-majorization}.
\end{proof}

\begin{proof}[Proof of Theorem~\ref{thm:two-slice}]
If $x_0+x_1=0$ or $\sum_j y_j=0$, both sides vanish.  By homogeneity, assume
\[
        x_0+x_1=1,
        \qquad
        \sum_{j=0}^m y_j=1.
\]
If $x_1=0$, then the left-hand side is $\sum_{j=0}^m y_j^{p_0}$, which is at
least $(\sum_jy_j)^{p_0}=1$ by subadditivity of $u\mapsto u^{p_0}$.  Suppose
now that $x_1>0$.  Since $x_0\ge x_1$, there is a unique $t\in(0,1]$ such
that
\[
        x_0=\frac1{1+t},
        \qquad
        x_1=\frac{t}{1+t}.
\]
For the normalized data, multiplying the left-hand side of
\eqref{eq:two-slice} by $(1+t)^{p_0}$ gives exactly
\begin{equation}\label{eq:normalized-max-identity}
 y_0^{p_0}
 +\sum_{j=1}^m\max\{y_j,ty_{j-1}\}^{p_0}
 +(ty_m)^{p_0}.
\end{equation}
The normalized right-hand side becomes $(1+t)^{p_0}$.  Thus
\eqref{eq:two-slice} is equivalent to $\mathcal F_{p_0}(y)\ge0$.

By Lemma~\ref{lem:max-tie}, choose a minimizer satisfying
\eqref{eq:max-tie-ratios}, and let $N=\max\{j:y_j>0\}$.  For
$1\le j\le N$, the ratio condition gives $y_j\ge ty_{j-1}$, and hence
\[
        \max\{y_j,ty_{j-1}\}=y_j.
\]
If $N<m$, the next term is
$\max\{y_{N+1},ty_N\}=ty_N$, and all later terms vanish; if $N=m$, the same
quantity is the terminal term in \eqref{eq:normalized-max-identity}.  It
therefore remains to prove
\begin{equation}\label{eq:defect-goal}
 \sum_{j=0}^N y_j^{p_0}+(ty_N)^{p_0}
 \ge
 (1+t)^{p_0}\left(\sum_{j=0}^Ny_j\right)^{p_0}.
\end{equation}

Put $B=\sum_{j=0}^N y_j$ and
\[
        g_j:=\frac{B t^j}{G_N(t)},\qquad 0\le j\le N.
\]
The ratios in \eqref{eq:max-tie-ratios} satisfy the hypotheses of
Lemma~\ref{lem:geometric-majorization}.  Thus $g$ majorizes $y$.  Since
$u\mapsto u^{p_0}$ is concave, Karamata's inequality
\cite[Chapter~3]{MarshallOlkinArnold2011} gives
\[
        \sum_{j=0}^N y_j^{p_0}
        \ge\sum_{j=0}^N g_j^{p_0}.
\]
The terminal-coordinate estimate in
\eqref{eq:last-coordinate-majorization} also gives
$(ty_N)^{p_0}\ge(tg_N)^{p_0}$.  Consequently,
\begin{align*}
 \sum_{j=0}^N y_j^{p_0}+(ty_N)^{p_0}
 &\ge \sum_{j=0}^N g_j^{p_0}+(tg_N)^{p_0}\\
 &=\frac{B^{p_0}}{G_N(t)^{p_0}}
   G_{N+1}(t^{p_0}).
\end{align*}
Corollary~\ref{cor:C1} now yields
\[
 \frac{B^{p_0}}{G_N(t)^{p_0}}G_{N+1}(t^{p_0})
 \ge (1+t)^{p_0}B^{p_0},
\]
which is \eqref{eq:defect-goal}.  This proves \eqref{eq:two-slice}.

To prove sharpness for $m\ge2$, suppose the same inequality held uniformly
with an exponent $p$.  Take
\[
        x_0=x_1=1,
        \qquad y_0=y_1=y_2=1,
\]
and set $y_j=0$ for $j\ge3$.  The max-convolution has four nonzero entries,
each equal to $1$, so the inequality becomes
\[
        4\ge 2^p3^p=6^p.
\]
Therefore $p\le\log4/\log6=p_0$, as claimed.
\end{proof}

\subsection{Mixed-alphabet sumsets}\label{sec:sumsets}

For nonnegative finitely supported functions $f,g:\Z^d\to[0,\infty)$,
define their max-convolution by
\[
        (f\star_{\max}g)(z):=\max_{x+y=z}f(x)g(y).
\]

\begin{proposition}[Functional mixed-alphabet inequality]
\label{prop:functional-mixed}
Let $m,d\ge1$.  If $f,g:\Z^d\to[0,\infty)$ are supported on
$\{0,1\}^d$ and $\{0,1,\ldots,m\}^d$, respectively, then
\begin{equation}\label{eq:functional-mixed}
 \sum_{z\in\Z^d}(f\star_{\max}g)(z)^{p_0}
 \ge
 \left(\sum_{x\in\Z^d}f(x)\right)^{p_0}
 \left(\sum_{y\in\Z^d}g(y)\right)^{p_0}.
\end{equation}
\end{proposition}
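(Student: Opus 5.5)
We argue by induction on the dimension $d$, following the tensorization scheme of \cite{BDFKK2011}; Theorem~\ref{thm:two-slice} serves as the one-dimensional base case.

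First we remove the monotonicity hypothesis in Theorem~\ref{thm:two-slice}. For $d=1$ we need the inequality for arbitrary nonnegative $x=(x_0,x_1)$ and $y=(y_0,\ldots,y_m)$. The reflection $i\mapsto 1-i$ lets us assume $x_0\ge x_1$. For $y$, let $y^*$ be its nonincreasing rearrangement. We claim that the $p_0$-sum of $x\star_{\max}y$ dominates that of $x\star_{\max}y^*$; a rearrangement argument for max-convolution gives this. In fact the level sets should compare directly. For each $\lambda>0$, the set $\{k:(x\star_{\max}y)_k>\lambda\}$ is the sumset $\{i:x_i>\lambda/\cdot\}+\cdots$, that is, the union over $i$ of $i+\{j:y_j>\lambda/x_i\}$. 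By the one-dimensional Cauchy--Davenport-type bound $|X+Y|\ge|X|+|Y|-1$ in $\Z$, this union has size at least that of the corresponding union for initial segments. Writing the sum of $p_0$-th powers through these level sets, and noting that the $y^*$ level sets are initial segments of the same sizes, gives the comparison. Theorem~\ref{thm:two-slice} applied to $x,y^*$ then yields the case $d=1$, since $\sum_j y^*_j=\sum_j y_j$.

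For the inductive step, write $z=(z',z_d)$ with $z'\in\Z^{d-1}$. Set $f_i(x')=f(x',i)$ for $i\in\{0,1\}$ and $g_j(y')=g(y',j)$ for $0\le j\le m$. Then
\[
(f\star_{\max}g)(z',k)\ge\max_{i+j=k}(f_i\star_{\max}g_j)(z').
\]
The difficulty is that the induction hypothesis controls $\sum_{z'}(f_i\star_{\max}g_j)^{p_0}$ but not a sum of maxima. We handle this with the standard level-set or Prékopa--Leindler-style device: for each fixed $z'$, consider the numbers $F_i:=\bigl(\sum f_i\bigr)$ and $G_j:=\bigl(\sum g_j\bigr)$. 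It is cleaner to pass to the functional $p_0$-version. Define $h_k(z')=\max_{i+j=k}(f_i\star_{\max}g_j)(z')$. We want
\[
\sum_k\sum_{z'}h_k^{p_0}\ge\Bigl(\sum_iF_i\Bigr)^{p_0}\Bigl(\sum_jG_j\Bigr)^{p_0}.
\]
The strategy is to prove, for each $k$, the bound
\[
\sum_{z'}h_k^{p_0}\ge\max_{i+j=k}\sum_{z'}(f_i\star_{\max}g_j)^{p_0}\ge\max_{i+j=k}(F_iG_j)^{p_0},
\]
where the second inequality is the induction hypothesis. After that, the one-dimensional inequality applied to $x_i=F_i$ and $y_j=G_j$ finishes the step. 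The first inequality is trivial, since a maximum of functions pointwise dominates each of them. The whole step therefore reduces to the unrestricted $d=1$ case.

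The main obstacle is the rearrangement step removing monotonicity of $y$ (and the mixed ordering of $x$) in dimension one. We would try the level-set argument above. Each superlevel set of $x\star_{\max}y$ is a union of translates $i+Y_{\lambda/x_i}$, where $Y_\mu=\{j:y_j>\mu\}$. Replacing each $Y_\mu$ by an initial segment of the same cardinality can only decrease the size of this union, because for $i\in\{0,1\}$ with $x_0\ge x_1$ we have $Y_{\lambda/x_0}\supseteq Y_{\lambda/x_1}$. The union is then $Y_{\lambda/x_0}\cup(1+Y_{\lambda/x_1})$, and its size is at least $|Y_{\lambda/x_0}|+[\,Y_{\lambda/x_1}\neq\emptyset\,]$ by considering the maximal element. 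The same count holds with equality for initial segments. Integrating $|\{\cdot>\lambda\}|$ against $d(\lambda^{p_0})$ completes the comparison.
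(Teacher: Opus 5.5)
Your architecture is the paper's. You reduce to $d=1$ by tensorization, reduce to nonincreasing data by rearrangement, and apply Theorem~\ref{thm:two-slice}. The paper simply cites \cite[Theorem~2.1]{BIKM2025} and \cite[Lemma~2.2]{BIKM2025} for the two reductions, whereas you reprove them.

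Your induction step is a correct proof of the first reduction. In fact $(f\star_{\max}g)(z',k)=\max_{i+j=k}(f_i\star_{\max}g_j)(z')$ holds with equality. The induction hypothesis then gives $\sum_{z'}h_k^{p_0}\ge\max_{i+j=k}(F_iG_j)^{p_0}$, and the unrestricted one-dimensional inequality with $x_i=F_i$, $y_j=G_j$ closes the step. The detour through ``Pr\'ekopa--Leindler-style'' devices is not needed.

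One small correction: reflecting $x$ alone is not a symmetry of $\star_{\max}$. You must reflect both sequences, $x_i\mapsto x_{1-i}$ and $y_j\mapsto y_{m-j}$, which sends $(x\star_{\max}y)_k$ to $(x\star_{\max}y)_{m+1-k}$. Since $y$ is arbitrary anyway, this costs nothing.

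The real problem is the counting in the rearrangement step. Write $a=|Y_{\lambda/x_0}|\ge b=|Y_{\lambda/x_1}|$ and $I_n=\{0,\ldots,n-1\}$.
\begin{itemize}
\item Your bound $|Y_{\lambda/x_0}\cup(1+Y_{\lambda/x_1})|\ge a+[b\ge1]$ is false.
\item Initial segments do not attain it either: $I_a\cup(1+I_b)=\{0,\ldots,\max(a-1,b)\}$, which has $3$ elements, not $4$, when $a=3$, $b=1$.
\item Concretely, $x=(1,\tfrac12)$ and $y=(3,1,1)$ give $x\star_{\max}y=(3,\tfrac32,1,\tfrac12)$. Its superlevel set at $\lambda=0.9$ has three elements, although $a=3$ and $b=1$.
\item The maximal-element argument fails because $1+\max Y_{\lambda/x_1}$ may already lie in $Y_{\lambda/x_0}$.
\item The Cauchy--Davenport remark in your first paragraph does not apply as stated, since a union of translates of different sets is not a sumset.
\end{itemize}

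The comparison you want is nevertheless true, and the repair is short. Because $Y_{\lambda/x_1}\subseteq Y_{\lambda/x_0}$, the union contains both $Y_{\lambda/x_0}$ and $\{0,1\}+Y_{\lambda/x_1}$. Hence for $b\ge1$ its size is at least $\max\{a,b+1\}$, using $|\{0,1\}+Y|\ge|Y|+1$ for nonempty $Y$; this is where your maximal-element idea belongs. For $b=0$ its size is at least $a$. These are exactly the values of $|I_a\cup(1+I_b)|$.

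With this corrected count, integrating against $d(\lambda^{p_0})$ gives $\sum_k(x\star_{\max}y)_k^{p_0}\ge\sum_k(x\star_{\max}y^*)_k^{p_0}$, and the rest of your argument goes through.
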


By the dimension-reduction theorem
\cite[Theorem~2.1]{BIKM2025}, it is enough to prove
\eqref{eq:functional-mixed} in dimension $d=1$.  By the rearrangement
principle \cite[Lemma~2.2]{BIKM2025}, after padding the two-term sequence
with zeros, it is enough to consider both sequences nonincreasing.  The
resulting one-dimensional statement is exactly
Theorem~\ref{thm:two-slice}.

\begin{proof}[Proof of Corollary~\ref{cor:sumset}]
Apply Proposition~\ref{prop:functional-mixed} with
$f=\one_A$ and $g=\one_B$.  Since
\[
        \one_A\star_{\max}\one_B=\one_{A+B},
\]
equation \eqref{eq:functional-mixed} gives
\[
        |A+B|\ge (|A||B|)^{p_0}.
\]

For sharpness when $m\ge2$, take
\[
        A=\{0,1\}^d,
        \qquad B=\{0,1,2\}^d.
\]
Then $A+B=\{0,1,2,3\}^d$, so
\[
        |A+B|=4^d,
        \qquad |A||B|=6^d.
\]
If the conclusion held uniformly with exponent $p$, it would give
$4^d\ge6^{pd}$ and hence $p\le\log4/\log6=p_0$.
\end{proof}

\section*{Acknowledgments}

  J.H. was
supported in part by Simons Foundation Collaboration Grant 601948 DJ.  P.I.
acknowledges partial support from NSF CAREER grant DMS-2152401, NSF grant
DMS-2554183, a Simons Fellowship, and a Humboldt Research Fellowship for
Experienced Researchers.  The authors used computational and language-model
tools during exploratory and editorial stages of this project; all
mathematical statements and proofs in the final manuscript were independently
checked by the authors.

\end{document}